\numberwithin{equation}{section}
\newtheorem{theorem}{Theorem}[section]
\newtheorem{lemma}[theorem]{Lemma}
\newtheorem{corollary}[theorem]{Corollary}
\theoremstyle{definition}
\newtheorem{definition}[theorem]{Definition}
\newtheorem{example}[theorem]{Example}
\theoremstyle{remark}
\newtheorem{remark}[theorem]{Remark}
\newcommand{\sA}{\mathcal{A}}% calligraphic math
\newcommand{\sC}{\mathcal{C}}% calligraphic math
\newcommand{\sE}{\mathcal{E}}% calligraphic math
\newcommand{\sF}{\mathcal{F}}% calligraphic math
\newcommand{\sG}{\mathcal{G}}% calligraphic math
\newcommand{\sL}{\mathcal{L}}% calligraphic math
\newcommand{\sO}{\mathcal{O}}% calligraphic math
\newcommand{\Oh}{\mathcal{O}}% calligraphic math
\newcommand{\sU}{\mathcal{U}}% calligraphic math
\newcommand{\sM}{\mathcal{M}}% calligraphic math
\newcommand{\K}{\mathbb{K}}% blackboard math
\newcommand{\C}{\mathbb{C}}% blackboard math
\newcommand{\HH}{\mathbb{H}}% blackboard math
\newcommand{\de}{\partial}
\newcommand{\Der}{\operatorname{Der}}
\newcommand{\Tot}{\operatorname{Tot}}
\newcommand{\Ext}{\operatorname{Ext}}
\newcommand{\Hom}{\operatorname{Hom}}
\newcommand{\Spec}{\operatorname{Spec}}
\newcommand{\rank}{\operatorname{rank}}
\newcommand{\Id}{\operatorname{Id}}
\newcommand{\Tr}{\operatorname{Tr}}
\newcommand{\At}{\operatorname{At}}
\newcommand{\HOM}{\mathcal{H}om}
\newcommand{\END}{\mathcal{E}nd}
\newcommand{\contr}{{\mspace{1mu}\lrcorner\mspace{1.5mu}}}
\dedicatory{Dedicato ad Enrico, con amicizia e gratitudine}
\title{Semiregularity maps and deformations of modules over Lie algebroids} 
\author{Ruggero Bandiera}
\address{
Universit\`a degli studi di Roma La Sapienza,
Dipartimento di Matematica  Guido
Castelnuovo, P.le Aldo Moro 5,
I-00185 Roma, Italy.}
\email{bandiera@mat.uniroma1.it}
\author{Emma Lepri}
\address{School of Mathematics and Statistics,
University of Glasgow,
University Place,
Glasgow G12 8QQ, UK.}
\email{Emma.Lepri@glasgow.ac.uk}
\author{Marco Manetti}
\address{Universit\`a degli studi di Roma La Sapienza,
Dipartimento di Matematica  Guido
Castelnuovo,
P.le Aldo Moro 5,
I-00185 Roma, Italy.}
\email{marco.manetti@uniroma1.it}
\date{6 September, 2023}
\subjclass[2020]{14D15, 17B70}
\keywords{curved DG-algebras, Lie algebroids,  Atiyah class, L-infinity maps, semiregularity}
\begin{document}

\begin{abstract}
We determine a DG-Lie algebra controlling deformations of a locally free module over a Lie algebroid $\sA$.
Moreover, for every  flat inclusion of Lie algebroids $\sA\subset \sL$ we introduce semiregularity maps and prove that they annihilate obstructions, provided that the Leray spectral sequence of the pair $(\sL,\sA)$ degenerates at $E_1$.
\end{abstract}

\maketitle

\section{Introduction}

Let $X$ be a  separated scheme of finite type over a field $\K$ of characteristic $0$ and 
let $\sE$ be a locally free sheaf on $X$. Following Buchweitz and Flenner \cite{BF}, the 
semiregularity maps of $\sE$ are defined as 
\[ \tau_k\colon \Ext^2_X(\sE,\sE)\to H^{2+k}(\Omega^k_X),\quad \tau_k(x)=\frac{1}{k!}\Tr(\At(\sE)^kx),\qquad k
\ge 0,\]
where $\At(\sE)\in \Ext^1_X(\sE,\sE\otimes\Omega^1_X)$ is the Atiyah class of $\sE$.

After \cite{ChS,BF,Pri} it is known that these semiregularity maps annihilate obstructions to deformations, provided that the Hodge to de Rham spectral sequence of $X$ degenerates at $E_1$. More generally, writing $\Omega^{\le k}_X$ for the algebraic de Rham complex truncated in degree $\le k$, it is known that the composition of $\tau_k$ with the natural map 
$H^{2+k}(\Omega^k_X)\to \HH^{2+2k}(\Omega^{\le k}_X)$ annihilates obstructions, regardless of degeneration properties of the aforementioned spectral sequence.

The main goal of this paper is to extend these results to locally free modules over a Lie algebroid 
$\sA$ on $X$, see  Definition~\ref{def.liealgebroid} below. By definition, a locally free $\sA$-module is a pair $(\sE,\nabla)$, where $\sE$ is a  locally free $\Oh_X$-module, and 
\[ \nabla\colon \sA\to \HOM_{\K}(\sE,\sE),\quad l\mapsto \nabla_l,\]
is an $\Oh_X$-linear map  such that: 
\begin{enumerate}
\item $\nabla$ is an $\sA$-connection; by definition, this means that 
$\nabla_l(fe) = a(l)(f)e+f\nabla_l(e)$ for $l\in\sA$, $f\in \Oh_X$ and $e\in \sE$, where 
$a\colon \sA\to \Theta_X$ is the anchor map;

\item the $\sA$-connection $\nabla$ is flat, i.e., its  
curvature $\nabla^2(l,m)=[\nabla_l,\nabla_m]-\nabla_{[l,m]}$ vanishes identically.
\end{enumerate}

When $\sA=\Theta_X$ with anchor map the identity, then the notion of $\sA$-connection reduces to   the usual definition of analytic connection.

Recall also that the Atiyah class of a locally free sheaf can be defined as the obstruction to the existence of an analytic connection.  In other words, the  Atiyah class of  $\sE$ can be defined as the obstruction to the lifting of the (unique) $0$-connection on $\sE$ to a $\Theta$-connection; in 
view of the  generalisation considered in this paper we also write $\At(\sE)=\At_{\Theta/0}(\sE)$.

By a straightforward generalisation, we can replace $\Theta$ with $\sA$ and define $\At_{\sA/0}(\sE)$ as the obstruction to the existence of an $\sA$-connection on $\sE$; however, this generalisation does not lead to anything new from the point of view of semiregularity maps and deformation theory. 

Instead, we are here interested in the definition of a class $\At_{\sL/\sA}(\sE)$ in the following situation:
\begin{enumerate} 
\item $\sA\subset \sL$ is an  inclusion of Lie algebroids such that the quotient sheaf
$\sL/\sA$ is locally free; 

\item $(\sE,\nabla)$ is a locally free $\sA$-module. 
\end{enumerate}

In the above situation the quotient  sheaf $\sL/\sA$ carries a natural structure of $\sA$-module given by the \emph{Bott connection} $\nabla^B\colon \sA\to \END_{\K}(\sL/\sA,\sL/\sA)$, 
$\nabla^B_a(x)=[a,x] \pmod{\sA}$. Thus, for every $r\ge 0$, 
the sheaf $\sM_r:=\bigwedge^r(\sL/\sA)^\vee\otimes\HOM_{\Oh_X}(\sE,\sE)$ carries a natural structure of 
$\sA$-module.

Denoting by 
$\HH^*(\sA; \sM_r)$ the Lie algebroid cohomology of $\sA$ with coefficients in $\sM_r$ (see Definition \ref{def.Lie_alg_coh}), in this paper we prove in particular that:
\begin{enumerate} 
\item $\HH^1(\sA; \sM_0)$ is the space of first order deformations of $\sE$ as an $\sA$-module;
\item $\HH^2(\sA; \sM_0)$ is a complete obstruction space for deformations of $\sE$ as an $\sA$-module;
\item the Atiyah class $\At_{\sL/\sA}(\sE)\in \HH^1(\sA; \sM_1)$ is properly defined.
\end{enumerate}
The first two items above are proved by showing that the DG-Lie algebra of derived sections of the sheaf of DG-Lie algebras  $\Omega^*(\sA)\otimes \HOM_{\Oh_X}(\sE,\sE)$ controls deformations of $\sE$ as 
an $\sA$-module, where  $\Omega^*(\sA)$ is the de Rham DG-algebra of $\sA$.
The Atiyah class $\At_{\sL/\sA}(\sE)$  is the primary obstruction to the extension of $\nabla$ to a flat $\sL$-connection. More precisely, $\At_{\sL/\sA}(\sE)$  is the obstruction  to the extension of $\nabla$ to an $\sL$-connection $\nabla'\colon \sL\to \HOM_{\K}(\sE,\sE)$
such that $[\nabla'_l,\nabla'_a]=\nabla'_{[l,a]}$ for every $l\in \sL$ and $a\in \sA$, cf. \cite{CSX}.

By analogy with the classical case, 
we define the semiregularity maps 
\[ \tau_k\colon \HH^2(\sA; \sM_0)\to \HH^{2+k}\bigg(\sA;{\bigwedge}^k(\sL/\sA)^\vee\bigg),\quad \tau_k(x)=\frac{1}{k!}
\Tr(\At_{\sL/\sA}(\sE)^kx),\]
and we use the main result of \cite{ChS} in order to prove that every $\tau_k$ annihilates obstructions, provided that the Leray spectral sequence (Definition~\ref{def.Leray_filtration})
of the pair $(\sL,\sA)$ degenerates at $E_1$.

\subsection{Notation} In this paper we work over a fixed field $\K$ of characteristic 0; unless otherwise specified every (graded) vector space is intended over $\K$. 

Unless otherwise specified the term \emph{differential graded} (DG) means graded over the integers and with differential of degree $+1$. The degree of a homogeneous element $x$ in a graded vector space will be denoted $|x|$.
We adopt the Grothendieck--Verdier formalism for degree shifting: given a DG-vector space 
$(V=\oplus_nV^n, d_V )$ and an integer $p$, we define the DG-vector space $(V[p], d_{V[p]})$ by setting
$V[p]^n=V^{n+p}$, $d_{V[p]}=(-1)^pd_V$. 

\bigskip
\section{Semiregularity maps for curved DG-algebras}

We briefly review some definitions and results from \cite{ChS}.
By a graded algebra we intend a unitary graded associative algebra over a fixed field $\K$ of characteristic $0$.  Every graded associative algebra is also a graded Lie algebra, with the bracket given by 
the graded commutator  $[a,b]=ab-(-1)^{|a||b|}ba$.

\begin{definition}\label{def.curved-algebra} A curved DG-algebra is the datum $(A,d,\cdot,R)$ of a graded associative algebra $(A,\cdot)$ together with a degree one derivation $d\colon A^*\to A^{*+1}$ and a degree two element $R\in A^2$, called \emph{curvature}, such that
	\[ d(R)=0,\qquad d^2(x)=[R,x]=R\cdot x-x\cdot R\quad\,\,\forall\, x\in A. \]
\end{definition}

For notational simplicity we shall write $(A,d,R)$ in place of $(A,d,\cdot,R)$ when the product $\cdot$ is clear from the context. We denote by $[A,A]\subset A$ the \emph{linear span} of all the graded commutators $[a,b]=ab-(-1)^{|a||b|}ba$.  Notice that $[A,A]$ is a homogeneous Lie ideal
and then $A/[A,A]$  inherits  a natural structure of DG-Lie algebra with trivial bracket.

\begin{definition}\label{def.curved-ideal} Let $A=(A,d,R)$ be a curved DG-algebra. 
A \emph{curved ideal} in $A$ is  homogeneous bilateral ideal 
$I\subset A$  such that $d(I)\subset I$ and $R\in I$. 

By a \emph{curved DG-pair} we mean the data $(A,I)$ of a curved DG-algebra $A$ equipped with a
curved ideal $I$. 
\end{definition}

In particular, for every curved DG-pair $(A,I)$, the quotient $A/I$ is a (non-curved) associative DG-algebra, and therefore also a DG-Lie algebra.
Writing $I^{(k)}$, $k\ge 0$, for the $k$th power of $I$, we have that 
$I^{(k)}$ is an associative bilateral ideal of $A$ for every $k$.
The differential graded algebra $\operatorname{Gr}_I A = \bigoplus_{k\ge 0} \dfrac{I^{(k)}}{I^{(k+1)}}$ is non-curved, since $d(I)\subset I$ and $d^2(I)\subset I^{(2)}$, the derivation $d$ factors through differentials
\[ d\colon \dfrac{I^{(k)}}{I^{(k+1)}} \to \dfrac{I^{(k)}}{I^{(k+1)}},\qquad d^2=0. \]

\begin{definition}\label{def.atiyahclass} Let $A=(A,d,R)$ be a curved DG-algebra and 
$I\subset A$ a curved ideal. 
The \emph{Atiyah cocycle} of the pair $(A,I)$ is  the class of $R$ in the DG-vector space 
$\dfrac{I}{I^{(2)}}$. The \emph{Atiyah class} of the pair $(A,I)$ is the cohomology 
class of the  Atiyah cocycle:
\[ \At(A,I)=[R]\in H^2\left(\dfrac{I}{I^{(2)}}\right).\]
\end{definition}

For every $x \in I$ of degree 1, we can consider the twisted derivation $d_x := d + [x,-]$ with curvature $R_x= R + dx + \dfrac{1}{2}[x,x]$. Then $I$ remains a curved ideal of the twisted 
 curved DG-algebra $(A,d_x, R_x)$.

\begin{lemma}\label{lem.curvatura_astratta}
	The Atiyah class of the pair $(A, d_x,R_x,I)$ does not depend on the choice of $x \in I$.
	The Atiyah class $\At(A,I)$ is trivial if and only if there exists $x \in I$ of degree 1 such that $R_x$ belongs to $I^{(2)}$.
\end{lemma}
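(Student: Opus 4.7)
The plan is to verify both assertions via a direct computation of $R_x$ modulo $I^{(2)}$, using that $x \in I^1$ has odd degree.

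First, I would observe that the twisted curvature $R_x = R + dx + \tfrac{1}{2}[x,x]$ lies in $I$: indeed $R \in I$ by hypothesis, $dx \in I$ because $d(I) \subset I$, and since $x$ has degree $1$ we have $[x,x] = xx - (-1)^{1 \cdot 1}xx = 2x^2 \in I^{(2)}$. In particular $\tfrac{1}{2}[x,x] \in I^{(2)}$, so
\[ R_x \equiv R + dx \pmod{I^{(2)}}. \]
This already shows that $I$ is a curved ideal in $(A,d_x,R_x)$, so the Atiyah class of the twisted pair is defined.

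Next, I would use that the induced differential on $I/I^{(2)}$ (described in the excerpt) satisfies $d[x] = [dx]$. The display above then reads
\[ [R_x] = [R] + d[x] \quad \text{in } I/I^{(2)}, \]
so $R_x$ and $R$ represent the same cohomology class in $H^2(I/I^{(2)})$. This proves the first assertion.

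For the second assertion, by definition $\At(A,I) = 0$ means exactly that the class $[R] \in H^2(I/I^{(2)})$ is trivial, i.e., there exists $y \in I^1$ with $R + d(-y) \in I^{(2)}$. Setting $x = -y$ and invoking the congruence above, this is equivalent to $R_x \in I^{(2)}$. The converse direction is the same computation run backwards: if $R_x \in I^{(2)}$ for some $x \in I^1$, then $R \equiv -dx \pmod{I^{(2)}}$, so $[R] = -d[x]$ is a coboundary.

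There is no real obstacle; the only point requiring care is the vanishing of $\tfrac{1}{2}[x,x]$ modulo $I^{(2)}$, which relies crucially on $x$ having degree $1$ so that the graded commutator becomes $2x^2$ rather than $0$. Everything else is formal manipulation with the induced differential on the associated graded.
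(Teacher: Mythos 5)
Your proof is correct and follows essentially the same route as the paper's: reduce $R_x$ modulo $I^{(2)}$ to $R+dx$ and read off both assertions in $H^2(I/I^{(2)})$. Two small remarks: the paper first records explicitly that $d$ and $d_x$ induce the same differential on $I/I^{(2)}$ (since $[x,-]$ maps $I^{(k)}$ into $I^{(k+1)}$), which is needed for the first assertion to be well posed and which you use only implicitly; and your closing comment is slightly misleading — $[x,x]\in I^{(2)}$ holds for any $x\in I$ simply because each summand of the graded commutator is a product of two elements of $I$, so nothing "relies crucially" on the degree being $1$.
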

\begin{proof}
	Firstly, notice that the differential on the algebra $\operatorname{Gr}_IA$ does not depend on the choice of $x \in I$: since $x$ belongs to $I$ the adjoint operator $[x,-]$ sends  $I^{(k)}$  to $I^{(k+1)}$, and so $d= d_x :=d + [x,-]$ in $\dfrac{I^{(k)}}{I^{(k+1)}}$. 
	In $\dfrac{I}{I^{(2)}}$, one has that $[x,x]=0$, so that
	\[ R_x- R = R + dx + \dfrac{1}{2}[x,x] - R= dx, \]
and the cohomology classes of $R$ and $R_x$ in $H^*\left(\dfrac{I}{I^{(2)}}\right)$ coincide.

Let now $x  \in I$ be such that $R_x = R + dx + \dfrac{1}{2}[x,x]$ belongs to $I^{(2)}$. Then $R + dx$ also belongs to  $I^{(2)}$ and $R = -dx $ in $\dfrac{I}{I^{(2)}}$, so that the Atiyah class is trivial. Conversely, let $R = dx$ in $\dfrac{I}{I^{(2)}}$, then $R- dx $ belongs to $I^{(2)}$, and so does $R_{-x} = R -dx +\dfrac{1}{2}[x,x]$.
\end{proof}

\begin{definition}\label{def.tracemap} A \emph{trace map} on a   
curved DG-algebra $(A,d,R)$ is the data of a complex of vector 
spaces $(C,\delta)$ and a morphism of graded vector spaces $\Tr\colon A\to C$ such that $\Tr\circ  d=\delta\circ \Tr$ and $\Tr([A,A])=0$.
\end{definition}

Assume now there are given a curved DG-algebra $(A,d,R)$, a curved ideal $I$ and a trace map $\Tr\colon A\to C$. Consider the decreasing filtration $C_k=\Tr(I^{(k)})$ of subcomplexes of $C$. By basic homological algebra, the spectral sequence associated to this filtration degenerates at $E_1$ if and only if for every $k$ the inclusion $C_k/C_{k+1}\subset C/C_{k+1}$ is injective in cohomology, see e.g. \cite[Thm. C.6.6]{LMDT}.  

In the above situation we can define semiregularity maps
\[ \tau_k\colon H^2(A/I)\to H^{2+2k}(C_k/C_{k+1}),\qquad \tau_k(x)=\frac{1}{k!}\Tr(\At(A,I)^kx).\]
The composition of $\tau_k$ with the natural morphism $H^{2+2k}(C_k/C_{k+1})\to H^{2+2k}(C/C_{k+1})$ is induced by the morphism of complexes
\[ \sigma_k^1\colon \frac{A}{I}\to \frac{C}{C_{k+1}}[2k],\quad \sigma_k^1(x)=\frac{1}{k!}\Tr(R^kx).\]
Considering $C/C_{k+1}$ as a DG-Lie algebra with trivial bracket, we can immediately see that $\sigma_k^1$ is a morphism of DG-Lie algebras for $k=0$, while for $k>0$ we have the following result.

\begin{theorem}[{\cite[Corollary 2.10]{ChS}}]\label{thm.ChS}
In the above situation, the map $\sigma_k^1$ is the linear component of an $L_{\infty}$-morphism 
$\sigma_k\colon A/I\rightsquigarrow C/{C_{k+1}}[2k]$. In particular, $\sigma_k^1$ annihilates obstructions for the deformation functor associated to the DG-Lie algebra $A/I$.
\end{theorem}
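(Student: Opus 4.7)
\emph{Plan.} The main task is to construct higher Taylor components $\sigma_k^n$ for $n\ge 2$ assembling with $\sigma_k^1$ into an $L_\infty$-morphism $\sigma_k \colon A/I \rightsquigarrow C/C_{k+1}[2k]$. My proposed strategy is a Chern--Weil/Chern--Simons transgression construction. For a Maurer--Cartan element $x \in A/I$ with a chosen lift $\widetilde x \in A$, the twisted curvature $R_{\widetilde x} = R + d\widetilde x + \tfrac{1}{2}[\widetilde x,\widetilde x]$ lies in $I$ (by the same argument as in Lemma~\ref{lem.curvatura_astratta}), and its $k$-th trace $\tfrac{1}{k!}\Tr(R_{\widetilde x}^k)$ lies in $\Tr(I^{(k)}) = C_k$. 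Polarizing this expression in several variables, combined with a simplex integration over $\Delta^{n-1}$ to correct the degree parity---a purely polynomial ansatz $\Tr(R^{j_0}x_1 R^{j_1}\cdots x_n R^{j_n})$ cannot match the required output degree $\sum|x_i|+1-n+2k$ of $\sigma_k^n$ when $n\ge 2$---produces candidates for $\sigma_k^n$ of $L_\infty$-degree $1-n$ landing in $C/C_{k+1}[2k]$.

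To verify the $L_\infty$-relations one uses the triviality of the bracket on $C/C_{k+1}[2k]$ to reduce them to a hierarchy of identities of the schematic form
\[
\delta\,\sigma_k^n(x_1,\ldots,x_n) = \sum_{i<j} \pm\, \sigma_k^{n-1}\!\big([x_i,x_j],\,\ldots\widehat{x_i}\ldots\widehat{x_j}\ldots\big) + \sum_i \pm\, \sigma_k^n(x_1,\ldots,dx_i,\ldots,x_n),
\]
to be checked modulo $C_{k+1}$. These should follow from exactly three algebraic inputs intrinsic to the curved DG-pair and its trace: the Bianchi identity $dR = 0$, the relation $d^2 = \ad(R)$, and the cyclic vanishing $\Tr([A,A]) = 0$. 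The quotient by $C_{k+1}$ is crucial: any ambiguity from changing the lift $\widetilde x$ or reordering factors of $R$ across inputs produces elements of $\Tr(I^{(k+1)}) = C_{k+1}$, using $R \in I$ and $d(I) \subset I$.

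The main technical obstacle is the combinatorial and sign bookkeeping needed to pin down the simplex-integral coefficients of $\sigma_k^n$ and to verify the $L_\infty$-relations under graded (anti)symmetry; the parity mismatch highlighted above is precisely what forces the simplicial transgression instead of a direct polynomial formula, and is the reason the naive observation ``$\sigma_k^1$ is a DG-Lie morphism for $k=0$'' fails for $k>0$.

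For the ``in particular'' clause, once $\sigma_k$ has been exhibited as an $L_\infty$-morphism it induces a natural transformation of the associated Maurer--Cartan/deformation functors $\Def_{A/I}\to \Def_{C/C_{k+1}[2k]}$ compatible with obstruction theory. A standard principle (cf.\ \cite{LMDT}) then guarantees that the linear component of an $L_\infty$-morphism into a trivial-bracket DG-Lie algebra annihilates the obstructions of the source deformation functor; applied to $\sigma_k^1$ this yields the stated conclusion with no further work.
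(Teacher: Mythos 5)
The first thing to note is that the paper does not prove this statement at all: it is imported verbatim from \cite[Corollary 2.10]{ChS}, and the body of the paper contains no argument for it. So the relevant comparison is with the proof in that reference, whose strategy (as its title, ``$L_\infty$ liftings of semiregularity maps via Chern--Simons classes,'' indicates) lives in the same Chern--Weil/Chern--Simons circle of ideas you describe. Your identification of the essential algebraic inputs ($dR=0$, $d^2=\ad(R)$, $\Tr([A,A])=0$), of the role of the quotient by $C_{k+1}=\Tr(I^{(k+1)})$ in absorbing the ambiguities coming from $R\in I$ and $d(I)\subset I$, and of the standard mechanism by which the linear component of an $L_\infty$-morphism into an abelian DG-Lie algebra annihilates obstructions, are all correct and are exactly the ingredients any proof must use.

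As a proof, however, the proposal has a genuine gap: the higher Taylor coefficients $\sigma_k^n$ for $n\ge 2$ are never actually written down, and the $L_\infty$-relations are never verified --- you state the schematic relation to be checked and then defer ``the combinatorial and sign bookkeeping,'' which is the entire content of the theorem. The existence of coefficients satisfying those relations is precisely what is being asserted, so a plan of the form ``polarize $\Tr(R_{\widetilde{x}}^{\,k})$, correct the degree by a simplex integration, then check the relations'' does not establish it. There is also a small error in the parity argument used to motivate the transgression: a monomial $\Tr(R^{j_0}x_1R^{j_1}\cdots x_nR^{j_n})$ has degree $\sum_i|x_i|+2\sum_i j_i$, which differs from the target degree $\sum_i|x_i|+1-n+2k$ by $n-1$ modulo $2$, so the purely polynomial ansatz is ruled out on degree grounds only for $n$ even, not for all $n\ge 2$ as claimed. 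None of this makes the approach wrong --- it is the right circle of ideas --- but to stand as a proof you would either need to exhibit the $\sigma_k^n$ explicitly and carry out the verification, or do what the paper does and simply cite \cite[Corollary 2.10]{ChS}.
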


\bigskip
\section{Lie algebroid connections}
\label{sec.liealgebroid}

Throughout all this paper, $X$ will denote a smooth separated scheme of finite type over a field $\K$ of characteristic 0. 

We denote by $\Theta_X$ its tangent sheaf and by $\Omega^k_X$, $k\ge 0$, the sheaves of differential forms.   For every pair of sheaves of $\sO_X$-modules $\sF,\sG$ we denote by $\HOM_{\K}(\sF,\sG)$ and 
$\HOM_{\sO_X}(\sF,\sG)$ the sheaves of $\K$-linear morphisms and $\sO_X$-linear morphisms respectively. 
The $\sO_X$-module structure on $\sG$ induces an $\sO_X$-module structure
both on $\HOM_{\K}(\sF,\sG)$ and 
$\HOM_{\sO_X}(\sF,\sG)$.   We also write $\END_{\K}(\sF)$ and $\END_{\Oh_X}(\sF)$ for 
$\HOM_{\K}(\sF,\sF)$ and 
$\HOM_{\sO_X}(\sF,\sF)$ respectively.

Unless otherwise specified we write $\otimes$  for the tensor product over $\sO_X$, in particular for two $\sO_X$-modules $\sF,\sG$ we have $\sF\otimes\sG=\sF\otimes_{\sO_X}\sG$.

\begin{definition}\label{def.liealgebroid}
	A \emph{Lie algebroid} over $X$ is the data of $(\sL,[-,-],a)$ where:
	\begin{itemize} \item $\sL$ is a locally free coherent sheaf of $\sO_X$-modules;
		\item $[-,-]$ is a $\K$-linear Lie bracket on $\sL$;
		\item $a\colon\sL\to\Theta_X$ is a morphism of sheaves of $\sO_X$-modules, called the \emph{anchor map},  commuting with the brackets;
		\item finally, we require the Leibniz rule to hold
		\[   [ l , fm ] = a(l)(f)m + f[l,m],\qquad\forall\,l,m\in\sL, f\in\sO_X.  \]
	\end{itemize}
\end{definition}

\begin{example} The trivial sheaf $\sL=0$ and the tangent sheaf $\sL=\Theta_X$, with anchor map equal to the identity, are Lie algebroids. A Lie algebroid over $\operatorname{Spec} \K$ is exactly a Lie
	algebra over the field $\K$. Every sheaf of Lie algebras with $\Oh_X$-linear bracket can be considered as a Lie algebroid over $X$ with trivial anchor map.
\end{example}

\begin{example}[see \cite{DMcoppie} for details]\label{ex.coppie} Let $\sE$ be a locally free $\Oh_X$-module, then the sheaf of first order differential operators on $\sE$ with principal symbol has a natural structure of Lie algebroid. Since $\Theta_X$ is the sheaf of $\K$-linear derivations of $\Oh_X$, we can 
introduce the sheaf
\[ 
P(\Theta_X,\sE)=\{(\theta,\phi)\in \Theta_X\times \END_{\K}(\sE)\mid \phi(fe)=f\phi(e)+\theta(f)e,\; f\in \sO_X,\; e\in \sE\}.\]
Denoting by $a\colon P(\Theta_X,\sE)\to \Theta_X$ the projection on the first factor, we have an exact sequence of 
locally free $\sO_X$-modules
\[ 0\to  \END_{\sO_X}(\sE)\to P(\Theta_X,\sE)\xrightarrow{a}\Theta_X\to 0\]
and it is immediate to check that $P(\Theta_X,\sE)$ is a Lie algebroid with anchor map $a$.
Moreover, the map $P(\Theta_X,\sE)\to \END_{\K}(\sE)$, $(\theta,\phi)\mapsto \phi$, is injective
and its image is the sheaf of first order differential operators on $\sE$ with principal symbol.
\end{example}

The \emph{de Rham algebra  of $\sL$}  is defined as the sheaf of commutative graded algebras 
\[ \Omega^*(\sL)=\bigoplus_{k\ge 0}^{\rank\sL}\Omega^k(\sL),\qquad  \Omega^k(\sL)=\HOM^*_{\sO_X}(\sL[1]^{\odot k},\sO_X),\]
equipped with the convolution product. Notice that $\sL[1]$ is just $\sL$ considered as a graded sheaf 
concentrated in degree $-1$, hence $\Omega^*(\sL)$ is a locally free graded sheaf  with 
$\Omega^k(\sL)$ in degree $k$. By definition the convolution product is the dual of the coproduct 
$\Delta$  on the graded symmetric algebra $S(\sL[1])= \bigoplus_k \sL[1]^{\odot k}$, defined by
\[ \Delta(l_1,\ldots, l_n)=
\sum_{a=0}^{n} \sum_{\sigma\in S(a,n-a)}\!\!\epsilon(\sigma)
(l_{\sigma(1)},\ldots, l_{\sigma(a)})\otimes
(l_{\sigma(a+1)},\ldots, l_{\sigma(n)}),\]
where $\epsilon(\sigma)$ is the Koszul sign and $S(a,n-a)$ is the subset of unshuffles. 
More concretely, for  $\omega \in \Omega^k(\sL)$ and $\eta \in \Omega^j(\sL)$ we have 
\[ (\omega  \eta) (l_1, \ldots,l_{k+j}) 
\sum_{\sigma\in S(k,j)}\!\!(-1)^\sigma
\omega(l_{\sigma(1)},\ldots, l_{\sigma(k)})\eta(l_{\sigma(k+1)},\ldots ,l_{\sigma(k+j)}).\]
Notice that  the \emph{contraction product} 
\[ \sL\times\Omega^{k+1}(\sL)\xrightarrow{\contr} \Omega^k(\sL),\qquad (l\contr\omega)(l_1,\ldots,l_k)=
\omega(l,l_1,\ldots,l_k),\]
is $\sO_X$-bilinear and satisfies the Koszul  identity $l\contr(\omega\eta)=
(l\contr\omega)\eta+(-1)^{|\omega|}\omega(l\contr\eta)$.

More generally, if  $\sC^*$ is a sheaf of graded  associative $\sO_X$-algebras, the same holds for
\[ \Omega^*(\sL,\sC^*)=\Omega^*(\sL)\otimes\sC^*=\bigoplus_{k\ge 0} \HOM^*_{\sO_X}(\sL[1]^{\odot k},\sC^*).\]

The  \emph{de Rham differential of $\sL$}, denoted by $d_{\sL}\colon \Omega^k(\sL)\to \Omega^{k+1}(\sL)$, is defined by the formula (see e.g. \cite{MK}):
\[\begin{split} d_{\sL}(\omega)(l_0,\ldots,l_k)&=
\sum_{i=0}^n (-1)^{i} a(l_i)(\omega(l_0,\ldots,\widehat{l_i},\ldots,l_k))\\
&\quad+\sum_{i<j}(-1)^{i+j}\omega([l_i,l_j],l_0,\ldots,\widehat{l_i},\ldots,\widehat{l_j},\ldots,l_k).\end{split}\]

In particular for $\omega\in \Omega^0(\sL)=\sO_X$ we have 
$l\contr d_{\sL}(\omega)=d_{\sL}(\omega)(l)=a(l)(\omega)$, for every $l\in \sL$.
By definition $\Omega^k(\Theta_X)[k]=\Omega_X^k$ is 
the sheaf  of $k$-differential forms on $X$ and the global formula for the exterior derivative implies that  
$d_\Theta$ is the usual de Rham differential.

For every sheaf of $\sO_X$-modules $\sF$ we denote $\Omega^*(\sL,\sF)=\Omega^*(\sL)\otimes\sF$ and by 
\[\Omega^*(\sL)\times \Omega^*(\sL,\sF)\xrightarrow{\cdot} \Omega^*(\sL,\sF):\quad   
\eta\cdot (\sum_i \mu_i\otimes e_i)=\sum_i \eta\mu_i\otimes e_i,\quad \mu_i\in \Omega^*(\sL),\; e_i\in \sF,\]
\[\sL\times \Omega^*(\sL,\sF)\xrightarrow{\contr} \Omega^*(\sL,\sF):\quad   
l\contr (\sum_i \mu_i\otimes e_i)=\sum_i l\contr\mu_i\otimes e_i,\quad \mu_i\in \Omega^*(\sL),\; e_i\in \sF.\]

\begin{definition} Given  a  sheaf  of $\sO_X$-modules $\sF$, 
an $\sL$-connection $\nabla$ on $\sF$ is a $\K$-linear morphism of graded sheaves of degree $1$
\[ \nabla\colon \sF\to \Omega^1(\sL,\sF)=\Omega^1(\sL)\otimes \sF,\]
such that  
\[ \nabla(fe) = d_{\sL}(f)\cdot e+f\nabla(e),\qquad  \forall \ f\in\sO_X,\;  e\in\sF. \] 
\end{definition}

As in the usual case, every $\sL$-connection $\nabla$ admits a unique extension to $\K$-linear morphism of graded sheaves of $\sO_X$-modules of degree 
$1$ 
\[ \nabla\colon\Omega^*(\sL,\sF)\to \Omega^*(\sL,\sF)\]  
such that 
\begin{equation*}
	\nabla(f e) = d_{\sL}(f)\cdot e+(-1)^{|f|}f \nabla(e),\qquad \forall\; f\in\Omega^*(\sL),\;  e\in\Omega^*(\sL,\sF), 
\end{equation*}
and the connection is called flat if $\nabla^2=0$.

\begin{remark}\label{rem.derivata_covariante} Since the contraction product 
$\contr\colon \sL\times \Omega^1(\sL)\to \sO_X$ is  nondegenerate, every 
$\K$-linear morphism of sheaves $\nabla\colon \sF\to \Omega^1(\sL,\sF)$ is completely determined by 
the morphism of $\sO_X$-modules
 \[ \sL\to \HOM_{\K}(\sF,\sF),\quad l\mapsto \nabla_l:\qquad \nabla_l(e)=l\contr \nabla(e),\quad e\in \sF.\]
It is straightforward to verify that $\nabla$ is a connection if and only if 
\[ \nabla_l(fe) = a(l)(f)e+f\nabla_l(e),\qquad\forall\, f\in\sO_X,\; l\in\sL,\; e\in\sF. \] 
A simple computation shows that the curvature is given by the formula
\[ \nabla^2(l,m)(e)=\nabla_l\nabla_m(e)-\nabla_m\nabla_l(e)-\nabla_{[l,m]}(e),\qquad\forall\, l,m\in\sL,e\in\sF.\]

For instance, if $\sF$ is locally free and $\sL=\END_{\Oh_X}(\sF)$ (with trivial anchor map), then the natural inclusion  
$\sL\to \END_{\K}(\sF)$ is a flat connection.
\end{remark}

Since $\sL$ is locally free we have  natural isomorphisms 
\[ \Omega^*(\sL,\HOM_{\sO_X}(\sF,\sF))= 
\HOM_{\sO_X}^*(\sF,\Omega^*(\sL,\sF))=\HOM_{\Omega^*(\sL)}^*(\Omega^*(\sL,\sF),\Omega^*(\sL,\sF))\]
and, therefore, a natural identification of $\Omega^*(\sL,\HOM_{\sO_X}(\sF,\sF))$ with the subset of morphisms of graded sheaves 
$f\colon \Omega^*(\sL,\sF)\to \Omega^*(\sL,\sF)$ such that 
$f(\alpha\cdot\beta)=(-1)^{|f||\alpha|}\alpha\cdot f(\beta)$ for every $\alpha\in \Omega^*(\sL)$, 
$\beta\in \Omega^*(\sL,\sF)$.

The following lemma is a completely straightforward generalisation of  well known facts about connections and curvature.
 
\begin{lemma}\label{lem.sheafDGA}  Let $\nabla\colon\Omega^*(\sL,\sF)\to \Omega^*(\sL,\sF)$ be an $\sL$-connection, then 
$\nabla^2\in \Omega^2(\sL,\HOM_{\sO_X}(\sF,\sF))$ and 
$[\nabla,f]\in \Omega^*(\sL,\HOM_{\sO_X}(\sF,\sF))$ for every $f\in \Omega^*(\sL,\HOM_{\sO_X}(\sF,\sF))$.

In particular, 
$\left(\Omega^*(\sL,\HOM_{\sO_X}(\sF,\sF)), d=[\nabla,-], \nabla^2\right)$ is a properly defined sheaf of curved DG-algebras over $X$. 
\end{lemma}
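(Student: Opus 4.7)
The plan is to exploit the characterization, stated just before the lemma, that $\Omega^*(\sL,\HOM_{\sO_X}(\sF,\sF))$ can be identified with the subset of endomorphisms $f$ of the graded sheaf $\Omega^*(\sL,\sF)$ which are $\Omega^*(\sL)$-linear up to Koszul sign, i.e.\ which satisfy $f(\alpha\cdot\beta)=(-1)^{|f||\alpha|}\alpha\cdot f(\beta)$. Once that Leibniz identity is verified for $\nabla^2$ and for $[\nabla,f]$, membership in $\Omega^*(\sL,\HOM_{\sO_X}(\sF,\sF))$ follows automatically, and the curved DG-algebra axioms become formal consequences of the graded Jacobi identity and of $d_{\sL}^2=0$.

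First I would verify that $\nabla^2$ is $\Omega^*(\sL)$-linear. Using the connection identity and $d_{\sL}^2=0$, a direct computation gives
\[
\nabla^2(\alpha\cdot\beta)=\nabla\bigl(d_{\sL}\alpha\cdot\beta+(-1)^{|\alpha|}\alpha\cdot\nabla\beta\bigr)=d_{\sL}^2\alpha\cdot\beta+\bigl((-1)^{|\alpha|+1}+(-1)^{|\alpha|}\bigr)d_{\sL}\alpha\cdot\nabla\beta+\alpha\cdot\nabla^2\beta=\alpha\cdot\nabla^2\beta,
\]
and since $\nabla^2$ has degree $2$, this is precisely the required $\Omega^*(\sL)$-linearity with Koszul sign $(-1)^{2|\alpha|}=1$. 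Next I would check, for $f\in\Omega^*(\sL,\HOM_{\sO_X}(\sF,\sF))$, that $[\nabla,f]=\nabla\circ f-(-1)^{|f|}f\circ\nabla$ satisfies the same identity with $|f|$ replaced by $|f|+1$. Expanding $[\nabla,f](\alpha\cdot\beta)$ using the connection rule for $\nabla$ and the Leibniz rule for $f$, the two terms involving $d_{\sL}\alpha\cdot f(\beta)$ cancel thanks to the computation $|f|+|f|(|\alpha|+1)\equiv|f||\alpha|\pmod 2$, and what remains collects into $(-1)^{(|f|+1)|\alpha|}\alpha\cdot[\nabla,f](\beta)$, as desired.

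It remains to check the three items in the definition of a curved DG-algebra applied to $A:=\Omega^*(\sL,\HOM_{\sO_X}(\sF,\sF))$, with product the convolution product (the associative graded product on $\Omega^*(\sL)\otimes\HOM_{\sO_X}(\sF,\sF)$ coming from composition in $\HOM_{\sO_X}(\sF,\sF)$, which under the identification above is simply composition of graded endomorphisms of $\Omega^*(\sL,\sF)$). The map $d=[\nabla,-]$ is a degree $+1$ graded derivation of $A$ because on the ambient sheaf $\END_{\K}(\Omega^*(\sL,\sF))$ the graded commutator with any fixed element is automatically a graded derivation. The fact that $d(R)=[\nabla,\nabla^2]=0$ and that $d^2(x)=[\nabla,[\nabla,x]]=[\nabla^2,x]=[R,x]$ follow from the graded Jacobi identity applied to $[\nabla,[\nabla,-]]$, combined with the identity $2\nabla^2=[\nabla,\nabla]$ for the self-commutator of the odd-degree element $\nabla$; here it is essential that $\operatorname{char}\K=0$ to divide by $2$.

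The only real obstacle is a bookkeeping one, namely tracking the Koszul signs in the computation of $[\nabla,f](\alpha\cdot\beta)$; everything else is formal and uses only associativity, the graded Jacobi identity, and $d_{\sL}^2=0$.
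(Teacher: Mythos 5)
Your proof is correct, and it uses exactly the identification of $\Omega^*(\sL,\HOM_{\sO_X}(\sF,\sF))$ with the $\Omega^*(\sL)$-linear (up to Koszul sign) endomorphisms of $\Omega^*(\sL,\sF)$ that the paper sets up immediately before the lemma for this purpose; the sign bookkeeping in the computations of $\nabla^2(\alpha\cdot\beta)$ and $[\nabla,f](\alpha\cdot\beta)$ checks out, and the curved DG-algebra axioms do follow formally from the graded Jacobi identity and $[\nabla,\nabla]=2\nabla^2$. The paper itself omits the proof, calling it a straightforward generalisation of classical facts about connections and curvature, so your argument is precisely the intended one.
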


If in addition $\sF$ admits a locally free resolution, then the trace map
$\Tr\colon \HOM_{\sO_X}(\sF,\sF)\to \Oh_X$, which is a morphism of sheaves of Lie algebras, is properly defined. By an analogous calculation to that of \cite[Lemma 2.6]{linfsemireg}, its extension 
\begin{equation}\label{eq.trace}\Tr\colon \Omega^*(\sL,\HOM_{\sO_X}(\sF,\sF))\to \Omega^*(\sL),\quad
\Tr(\omega\cdot f)=\omega\cdot \Tr(f),\; \omega\in \Omega^*(\sL),\; f\in 
\HOM_{\sO_X}(\sF,\sF),\end{equation}
is a trace map in the sense of Definition~\ref{def.tracemap}.

\begin{definition}
An $\sL$-module is a pair $(\sF,\nabla)$ consisting of a sheaf of $\sO_X$-modules $\sF$ and a flat $\sL$-connection $\nabla$ on $\sF$. An $\sL$-module $(\sF,\nabla)$ is said to be coherent (resp.: torsion free, locally free) if $\sF$ is coherent (resp.: torsion free, locally free)  as an $\Oh_X$-module.
\end{definition}

\begin{example} Every $\Oh_X$-module has a unique structure of module over the trivial Lie algebroid $\sL=0$.
\end{example}

\begin{example}\label{ex.oicse} For every Lie algebroid $\sL$, 
the pair $(\sO_X,d_{\sL})$ is an $\sL$-module. 
More generally every choice of a basis on a free
$\sO_X$-module gives an $\sL$-module structure.  
\end{example}

Every $\sL$-connection $\nabla$ on a locally free $\Oh_X$-module $\sF$ naturally induces  $\sL$-connections 
on the associated sheaves $\sF^\vee,\HOM_{\Oh_X}(\sF,\sF),\sF^{\wedge k}$ etc.. If
$\sF$ is an $\sL$-module, then also $\sF^\vee$, $\HOM_{\Oh_X}(\sF,\sF)$, $\sF^{\wedge k}$ etc. are $\sL$-modules in a natural way.

\begin{example} Let $(X,\pi)$ be a smooth Poisson variety, and denote by $\{-,-\}$ the Poisson bracket on the sheaf of functions $\sO_X$. The cotangent sheaf $\Omega^1_X$ of holomorphic differential 1-forms on $X$ has an induced structure of holomorphic Lie algebroid with the anchor $a(df):=\{f,-\}$ and the bracket $[df,dg]:=d\{f,g\}$ for all $f,g\in\sO_X$ (this defines $a$ and $[-,-]$ completely since $\Omega^1_X$ is generated by exact forms as an $\sO_X$-module), see e.g. \cite{FMpoisson} for more details. An $\Omega^1_X$-module is the same as a coherent sheaf $\sE$ together with a sheaf of Poisson modules structure on the sections of $\sE$. Namely, continuing to denote by $\{-,-\}$ the Poisson bracket on $\sE$, the associated connection is defined by
\[	\nabla\colon \Omega^1_X \to \END_{\K}(\sE),\qquad df \mapsto \nabla_{df}, \qquad\nabla_{df}e := \{f,e\} \quad \forall f \in \Oh_X, \ e \in \sE.\]
The fact that $\nabla$ is an $\Omega^1_X$-connection on $\sE$ is equivalent to the Poisson identities
\[ \{f,ge\} = \{f,g\}e+g\{f,e\},\qquad \{fg,e\}=f\{g,e\}+g\{f,e\},\]
while the flatness of $\nabla$ is equivalent to the Jacobi identity
\[ \{\{f,g\},e\}=\{f,\{g,e\}\}-\{g,\{f,e\}\}.\]
\end{example}

\begin{definition}\label{def.Lie_alg_coh}
	Let $\sL$ be a Lie algebroid over $X$. The hypercohomology of the complex $(\Omega^*(\sL), d_{\sL})$ is called the \emph{Lie algebroid cohomology of $\sL$}, and it is denoted by $\HH^*(\sL)$,
	
For  an $\sL$-module $(\sF, \nabla)$ the complex $(\Omega^*(\sL, \sF), \nabla)$ is called the \emph{standard complex} of $(\sF, \nabla)$ and its hypercohomology, denoted by $\HH^*(\sL; \sF)$,  is called the \emph{Lie algebroid cohomology of $\sL$ with coefficients in $\sF$}. 
\end{definition}

Notice that $\HH^*(\sL)=\HH^*(\sL;\Oh_X)$, where $\Oh_X$ carries the $\sL$-module structure of Example~\ref{ex.oicse}.
The notion of standard complex is borrowed from \cite{MK}, while for Lie algebroid cohomology we follow the notation of \cite{ABR, bru}.

\begin{example}
The Lie algebroid cohomology of the tangent sheaf $\Theta_X$ is the de Rham cohomology of $X$. The Lie algebroid cohomology of a Lie algebroid $\mathfrak{g}$ over  $\operatorname{Spec} \K$ is the Chevalley--Eilenberg cohomology of the Lie algebra $\mathfrak{g}$.
\end{example}

\bigskip
\section{Infinitesimal deformations of locally free $\sL$-modules}

In this section we describe a DG-Lie algebra controlling the infinitesimal deformations of a locally free $\sL$-module. In order to do so, we give a brief review of the Thom--Whitney totalisation.

Let $\sL$ be a Lie algebroid over $X$ and let $(\sE,\nabla)$ be an $\sL$-module, with $\sE$ locally free as an $\sO_X$-module.
Let $B$ be an Artin local  $\K$-algebra with residue field $\K$. We denote by $X_B=X\times\operatorname{Spec}(B)$,  by $p_X\colon X\times\operatorname{Spec}(B)\to X$ the projection onto the first factor, and by $\imath_X:X\to X\times\operatorname{Spec}(B)$ the inclusion induced by $B\rightarrow B/\mathfrak{m}_B=\K$. We notice that the pull-back sheaf $p^*_X\sL=\sL\otimes_{\K}B$ has a natural structure of Lie algebroid over $X_B$, with the Lie bracket extending $B$-bilinearly the one on $\sL$. Moreover, it is easy to check that a $p^*_X\sL$-module $\sF$ on $X_B$ restricts to an $\sL$-module $\imath^*_X\sF$ on the central fibre $X$. 
\begin{definition}
	A deformation of the $\sL$-module $(\sE,\nabla)$ over $\operatorname{Spec}(B)$ consists of the data of a deformation $\sE_B$ of $\sE$ over $X_B$ and a $p^*_X\sL$-module structure 
\[\nabla_B\colon \sE_B\to \Omega^1(p^*_X\sL,\sE_B)=\Omega^1(\sL)\otimes_{\sO_X}\sE_B\] 
such that the restriction $\imath^*_X\sE_B$ to $X$, with the naturally induced $\sL$-module structure, coincides with $(\sE,\nabla)$.
An isomorphism of deformations $(\sE_B,\nabla_B)\to (\sE_B',\nabla_B')$ is an isomorphism of deformations of sheaves 
$\phi\colon \sE_B\to \sE_B'$ such that $\phi\nabla_B=\nabla_B'\phi$. 
\end{definition}
We want to describe a DG-Lie algebra controlling the infinitesimal deformations of 
$(\sE,\nabla)$. To this end we first 
review  the definition and some of the main properties of the Thom--Whitney totalisation functor $\Tot$; for more details see e.g. \cite{sheaves,semicos,semireg, LMDT}.
   	The Thom--Whitney totalisation is a functor  from the category of semicosimplicial DG-vector spaces to the category of DG-vector spaces.
  For every $n \geq 0$ consider
   \[ A_n = \frac{\K[t_0,  \ldots, t_n,dt_0, \ldots, dt_n]}{(1- \sum_i t_i , \sum_i dt_i)}\]
   the commutative differential graded algebra of polynomial differential forms on the affine standard $n$-simplex, and the maps
   \begin{equation*}
   \delta_k^* \colon A_n \to A_{n-1},\ \ 0\leq k \leq n\quad \quad\ \ \delta_k^*(t_i)=
   \begin{cases}
   	t_i\quad i<k\\
   	0 \quad i=k\\
   	t_{i-1} \quad i>k.
   \end{cases}
  \end{equation*}
  \begin{definition}\label{def.tot}
The Thom--Whitney totalisation of a semicosimplicial DG-vector space  $V$
\begin{center}
	\begin{tikzcd}
	V: & V_0 \arrow[r, "\delta_0", shift left] \arrow[r, "\delta_1"', shift right] & V_1 \arrow[r, "\delta_0", shift left=2] \arrow[r, "\delta_2"', shift right=2] \arrow[r, "\delta_1" description] & V_2 \arrow[r, shift left=3] \arrow[r, shift left] \arrow[r, shift right=3] \arrow[r, shift right] & \cdots
	\end{tikzcd}
\end{center}
is the DG-vector space
\[ \Tot(V)= \big\{ (x_n) \in \prod_{n\geq 0} A_n \otimes_{\K} V_n \ | \ (\delta^*_k \otimes \Id )x_n = (\Id \otimes \delta_k) x_{n-1} \text{ for every } 0 \leq k \leq n \big\} ,\]
with differential induced by the one on $\prod_{n\geq 0} A_n \otimes V_n$. 

If $f\colon V \to W$ is a morphism of semicosimplicial DG-vector spaces, then $\Tot(f) \colon \Tot(V) \to \Tot(W)$ is defined as the restriction of the map
\[ \prod \Id \otimes f \colon \prod_{n\geq 0} A_n \otimes_{\K} V_n \to \prod_{n\geq 0} A_n \otimes_{\K} W_n .\]
\end{definition}

The $\Tot$ functor is exact: given semicosimplicial DG-vector spaces $V,W, Z$ and morphisms
$f\colon V\to W$, $g \colon W \to Z$ such that for every $n \geq 0$ the sequence
\begin{center}
	\begin{tikzcd}
	0 \arrow[r] & V_n \arrow[r, "f"] & W_n \arrow[r, "g"] & Z_n \arrow[r] & 0
	\end{tikzcd}
\end{center}
is exact, one obtains an exact sequence
\begin{center}
	\begin{tikzcd}
	0 \arrow[r] & \Tot(V) \arrow[r, "f"] & \Tot(W) \arrow[r, "g"] & \Tot(Z) \arrow[r] & 0,
	\end{tikzcd}
\end{center}
see e.g. \cite{FHT, LMDT}.

 Given two semicosimplicial DG-vector spaces $V$ and $W$, then $\Tot(V \times W)$ is naturally isomorphic to $\Tot(V)\times \Tot(W)$. An important consequence is the preservation of multiplicative structures; in particular, we will use the fact that the functor $\Tot$ sends semicosimplicial DG-Lie algebras 
to DG-Lie algebras.

\begin{example}\label{ex.cech}
	Let $(\mathcal{E}^*,\delta)$ be a bounded below complex of quasi-coherent sheaves on $X$, and let $\mathcal{U}= \{ U_i\}$ be  an open affine cover of $X$. Denote by $U_{i_1 \cdots i_n}= U_{i_1}\cap \cdots \cap U_{i_n}$, and  consider the semicosimplicial DG-vector space of \v{C}ech cochains:
	\begin{center}
		\begin{tikzcd}
		\mathcal{E}^*(\mathcal{U}): \quad \prod\limits_i \mathcal{E}^*(U_i) \arrow[r, shift left, "\delta_0"] \arrow[r, shift right, "\delta_1"'] & {\prod\limits_{i,j}\mathcal{E}^*(U_{ij})} \arrow[r, shift left=2, "\delta_0"] \arrow[r, shift right=2, "\delta_2"'] \arrow[r, "\delta_1" description] & {\prod\limits_{i,j,k}\mathcal{E}^*(U_{ijk})} \arrow[r, shift left=3] \arrow[r, shift left] \arrow[r, shift right=3] \arrow[r, shift right] & \cdots.
		\end{tikzcd}
	\end{center}
	
According to  Whitney integration theorem, there exists a natural quasi-isomorphism 
\[ I\colon \Tot(\mathcal{U},\mathcal{E}^*)\to C^*(\sU, \mathcal{E}^*)\]
where $C^*(\sU, \mathcal{E}^*)=\oplus_i C^*(\sU, \mathcal{E}^i)[-i]$ is the hypercomplex 
of \v{C}ech cochains (see \cite{whitney} for the $C^\infty$ version, \cite{getzler04,lunardon,LMDT,navarro} for the algebraic version used here). 
Therefore the cohomology of $\Tot(\mathcal{U},\mathcal{E}^*)$ is isomorphic to the hypercohomology of the complex of sheaves $\mathcal{E}^*$  
and then the quasi-isomorphism class  of $\Tot(\mathcal{U},\mathcal{E}^*)$ does not depend on the affine open cover, since $H^i(\Tot(\mathcal{U},\mathcal{E}^*))=\HH^i(X,\mathcal{E}^*)$ and the 
map $I$ commutes with refinements of affine covers.

For our later application it is important to point out that there exists a natural inclusion of 
 DG-vector spaces $\Gamma(X, \mathcal{E}^*) \to \Tot(\mathcal{U},\mathcal{E}^*)$ such that the restriction of $I$ to $\Gamma(X, \mathcal{E}^*)$ is the natural inclusion map
\[i\colon \Gamma(X, \mathcal{E}^*) \to \prod\limits_i \mathcal{E}^*(U_i),\qquad i(s)=\{s_{|U_i}\}. \]
In fact, $\delta_0 i= \delta_1 i $, therefore 
\[ \delta_{j_k}\delta_{j_{k-1}}\cdots\delta_{i_1}i= \delta_0^k i,\quad \text{for every } 0\le j_s\le s,\]and this implies that 
\begin{equation}\label{equ.inclusionsecriontot} 
\iota \colon \Gamma(X, \mathcal{E}^*) \to \Tot(\mathcal{U},\mathcal{E}^*),\quad
\iota(a)=(1 \otimes i(a), 1 \otimes \delta_0 i(a), 1 \otimes \delta_0^2 i(a), \ldots)\end{equation}
is a properly defined injective morphism of DG-vector spaces.

For later use we point out that for every quasi-coherent sheaf $\sF$ and every affine open cover 
$\sU$, the inclusion $\Gamma(X,\sF)\subset \Tot(\mathcal{U},\sF)$ induces an isomorphism 
$\Gamma(X,\sF)\cong H^0(\Tot(\mathcal{U},\sF))$.
\end{example}

Returning to our initial situation of a locally free $\sL$-module $(\sE,\nabla)$, since the $\sL$-connection $\nabla$ is flat,
 by Lemma~\ref{lem.sheafDGA} $\left(\Omega^*(\sL,\HOM_{\sO_X}(\sE,\sE)), d=[\nabla,-]\right)$ is a sheaf of locally free DG-algebras, which gives rise to a sheaf of locally free DG-Lie algebras
$(\Omega^*(\sL,\HOM_{\sO_X}(\sE,\sE)),d=[\nabla, -],[-,-])$. 

\begin{theorem}\label{thm.Deformazioni} In the above situation, for every affine open cover $\sU=\{U_i\}$, the DG-Lie algebra 
$\Tot(\sU, \Omega^*(\sL,\HOM_{\sO_X}(\sE,\sE)))$ controls the infinitesimal deformations of $(\sE,\nabla)$.
In particular $\HH^1(\sL;\HOM_{\sO_X}(\sE,\sE))$ is the space of first order deformations and 
$\HH^2(\sL;\HOM_{\sO_X}(\sE,\sE))$ is an obstruction space.
\end{theorem}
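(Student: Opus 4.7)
The plan is to establish an isomorphism of deformation functors $\Def_{(\sE,\nabla)}\cong \Def_L$, where
\[ L=\Tot(\sU,\Omega^*(\sL,\HOM_{\Oh_X}(\sE,\sE))). \]
Once this holds, the last sentence will follow from the general principle that, for any DG-Lie algebra $L$, the tangent space of $\Def_L$ is $H^1(L)$ and $H^2(L)$ is a complete obstruction space, combined with the identification of $H^*(L)$ with $\HH^*(\sL;\HOM_{\Oh_X}(\sE,\sE))$ given by Example~\ref{ex.cech} applied to the standard complex of the $\sL$-module $\HOM_{\Oh_X}(\sE,\sE)$.

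First I would handle the local picture. Refining $\sU$ if necessary I may assume $\sE|_{U_i}$ is free for every $i$. Then any deformation $\sE_B$ of $\sE$ restricts on the affine scheme $U_i\times\Spec(B)$ to a free module, so, after choosing trivialisations $\psi_i\colon \sE_B|_{U_i\times \Spec(B)}\cong \sE|_{U_i}\otimes_{\K}B$ lifting the identity, the connection $\nabla_B$ transports to $\nabla+x_i$ with
\[ x_i\in \Omega^1(\sL,\HOM_{\Oh_X}(\sE,\sE))(U_i)\otimes \mathfrak{m}_B. \]
By Lemma~\ref{lem.sheafDGA} the curvature of $\nabla+x_i$ equals $[\nabla,x_i]+\tfrac12[x_i,x_i]$, so flatness of $\nabla_B$ is precisely the Maurer--Cartan equation in the local DG-Lie algebra $\Omega^*(\sL,\HOM_{\Oh_X}(\sE,\sE))(U_i)\otimes\mathfrak{m}_B$. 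On overlaps, $\psi_j\psi_i^{-1}=\exp(a_{ij})$ with $a_{ij}\in\Omega^0(\sL,\HOM_{\Oh_X}(\sE,\sE))(U_{ij})\otimes\mathfrak{m}_B$, and the requirement that both $\psi_i$ and $\psi_j$ identify the same $\nabla_B$ translates into the gauge equivalence $x_j=e^{a_{ij}}*x_i$; on triple intersections one obtains the usual non-abelian cocycle identity, and isomorphisms of deformations are obtained, after possibly changing trivialisations, by acting with a further collection of degree-zero gauge transformations.

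The second step is to match this collection of cocycle data with $\MC$-elements of $L\otimes\mathfrak{m}_B$ modulo gauge. The semicosimplicial DG-Lie algebra $\{\Omega^*(\sL,\HOM_{\Oh_X}(\sE,\sE))(U_{i_0\cdots i_n})\}_n$ built from $\sU$ is by construction the one whose Thom--Whitney totalisation is $L$, and the general formalism of Hinich, Getzler and Fiorenza--Manetti--Martinengo (see e.g.\ \cite{sheaves,LMDT}) interprets $\MC$-elements of $L\otimes\mathfrak{m}_B$ modulo gauge precisely as equivalence classes of such Čech-type cocycles, functorially in $B$. This is the step I expect to be the main obstacle: one must verify that the abstract gauge action on $\Tot$ faithfully corresponds to the geometric notion of isomorphism of deformations of $\sL$-modules, tracking the $\sL$-connection both inside the Maurer--Cartan equation and inside the gauge action. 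For locally free sheaves (without a Lie algebroid connection) this is carried out in \cite{sheaves}; the passage to $\sL$-modules adds only the bookkeeping of the connection, which is already packaged in the sheaf of DG-Lie algebras of Lemma~\ref{lem.sheafDGA}.
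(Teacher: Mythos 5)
Your proposal is correct and follows essentially the same route as the paper: both reduce to the affine/local case, where flatness of the deformed connection $\nabla+x$ becomes the Maurer--Cartan equation and isomorphisms of deformations become gauge equivalences, and both then globalise by descent of Deligne groupoids (the paper invokes Hinich's theorem directly, which is exactly the "main obstacle" step you defer to the Hinich/Getzler/Fiorenza--Manetti--Martinengo formalism). The only cosmetic difference is that you spell out the \v{C}ech cocycle bookkeeping with trivialisations $\psi_i$ and transition data $e^{a_{ij}}$, which the descent theorem packages for you.
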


\begin{proof} This result is probably well known to experts, at least in the case $\sL=\Theta_X$, cf. \cite[Thm. 6.8]{GoMil1}, and follows easily from Hinich's theorem on descent of Deligne groupoids.
According to \cite{hinichdescent}, it is sufficient to check that locally 
the Deligne groupoid of $\Omega^*(\sL,\HOM_{\sO_X}(\sE,\sE))$ is equivalent to the groupoid of 
deformations of  $(\sE,\nabla)$. 

In order to check this, it is not restrictive to assume $X$ affine. 
Given an Artin ring $B$ as above, up to isomorphism every deformation of 
$\sE$ is trivial, i.e. $\sE_B=\sE\otimes_{\K}B$ and 
$\HOM_{\sO_{X_B}}(\sE_B,\sE_B)=\HOM_{\sO_X}(\sE,\sE)\otimes_{\K}B$.
Denoting by $\nabla_0\colon \sE_B\to \Omega^1(p^*_X\sL,\sE_B)=\Omega^1(\sL,\sE)\otimes_{\K}B$ the natural $B$-linear extension of $\nabla$, every deformation of $\nabla$ over $B$ is of the form
$\nabla_0+x$, with $x\in \Gamma(X,\Omega^1(\sL,\HOM_{\sO_X}(\sE,\sE)))\otimes_\K\mathfrak{m}_B$, and the flatness condition 
$(\nabla_0+x)^2=0$ is exactly the Maurer--Cartan equation $dx+\frac{1}{2}[x,x]=0$.

To conclude the proof we only need to show that two solutions of the Maurer--Cartan equation $x,y$ are gauge equivalent if and only if there exists an isomorphism of deformations $\phi\colon \sE_B\to \sE_B$ such that 
$\phi(\nabla_0+x)\phi^{-1}=\nabla_0+y$. Every $\phi$ as above is of the form $\phi=e^a$, with 
$a\in \Gamma(X,\HOM_{\sO_X}(\sE,\sE))\otimes_\K\mathfrak{m}_B$, and then the condition 
$\phi(\nabla_0+x)\phi^{-1}=\nabla_0+y$ is equivalent to 
\[ \nabla_0+y=e^{[a,-]}(\nabla_0+x)=\nabla_0+x+\sum_{n=0}^\infty \frac{[a,-]^n}{(n+1)!}([a,x]-da),\]
which is the same as $y=e^a\ast x$, where $*$ denotes the gauge action.
\end{proof}

\begin{remark}\label{rem.deformazioni} One can consider a different deformation problem, namely the deformation of pairs (bundle, $\sL$-connection) without requiring the vanishing of the curvature. Then the same argument as above shows that this deformation problem is controlled by the DG-Lie algebra  
$\Tot(\sU, \Omega^{\le 1}(\sL,\HOM_{\sO_X}(\sE,\sE)))$, while it is well known that 
$\Tot(\sU,\HOM_{\sO_X}(\sE,\sE))$ controls the deformations of $\sE$ \cite{sheaves}.
\end{remark}

\bigskip
\section{Lie pairs}\label{sec.lie_pairs}

\begin{definition} A \emph{Lie pair} $(\sL,\sA)$ of Lie algebroids over $X$ is a pair consisting of a Lie algebroid $\sL$ over $X$ and a Lie subalgebroid $\sA\subset\sL$ such that the quotient sheaf $\sL/\sA$ is  locally free.
\end{definition}

Let $(\sL,\sA)$  be a Lie pair. Since $\sL/\sA$ is assumed locally free we have a surjective restriction map 
$\varrho\colon\Omega^*(\sL)\to \Omega^*(\sA)$, which is a morphism of sheaves of commutative differential graded algebras.
The powers of its kernel give a finite decreasing filtration of differential graded ideal sheaves 
\[ \Omega^*(\sL)=\sG_0^*\supset \sG_1^*=\ker(\varrho)\supset\cdots \sG_r^*=(\ker(\varrho))^{(r)}\supset\cdots.\]
If we forget the de Rham differential, we can immediately see that $\sG_p^*$ is the image of the morphism of graded 
$\Oh_X$-modules 
\[ \bigwedge^p\left({\sL}/{\sA}\right)^\vee[-p]\otimes \Omega^*(\sL)\to \Omega^*(\sL),\]
and we have natural isomorphisms of graded sheaves
\begin{equation}\label{eq.iso_quozienti_Gp} \frac{\sG_p^*}{\sG^*_{p+1}}[p]\cong\bigwedge^p\left({\sL}/{\sA}\right)^\vee\otimes \Omega^*(\sA).\end{equation}
In particular,  $\sG_p^i\not=0$  only for pairs $(i,p)$ such that 
$p\le i\le \rank \sL$ and $p\le \rank\sL-\rank\sA$. For instance, whenever $i=2$ we have $\sG_0^2=\Omega^2(\sL)$, $\sG_3^2=0$,  
\[\begin{split} \sG_1^2&=\{\phi\in \Omega^2(\sL)\mid \phi(a,b)=0\;\forall a,b\in \sA\},\\
\sG_2^2&=\{\phi\in \Omega^2(\sL)\mid \phi(a,l)=0\;\forall a\in \sA,\, l\in \sL\}.\end{split}
\]

Recall that $\HH^*(\sL) = \HH^{*}(X,\Omega^*(\sL))$ denotes the Lie algebroid cohomology of $\sL$, as in Definition~\ref{def.Lie_alg_coh}.

\begin{definition}\label{def.Leray_filtration} In the above notation, the filtration $\Omega^*(\sL)=\sG_0^*\supset \sG_1^*\cdots$ is called 
the \emph{Leray filtration} of the Lie pair $(\sL,\sA)$. We shall call the associated spectral sequence in hypercohomology
\[ E_1^{p,q}=\HH^{q}\left(X,\sG^*_p/\sG^*_{p+1}[p]\right)\Rightarrow \HH^{p+q}(\sL)\]
the \emph{Leray spectral sequence} of the Lie pair $(\sL,\sA)$. 
\end{definition}

The name \emph{Leray filtration} is motivated by Example~\ref{ex.holomorphicLeray} below. Notice however that for the Lie pair 
$(\Theta_X,0)$ the Leray filtration coincides with the Hodge filtration on differential forms.
\medskip

Given an $\sA$-module $(\sE,\nabla)$, we can also define a filtration $\sG^*_r(\sE)=\sG^*_r\otimes \sE$ of the graded sheaf $\Omega^*(\sL,\sE)$; equivalently,  $\sG^*_r(\sE)$ may be defined as the image of the multiplication map
\[ \sG^*_r\otimes \Omega^*(\sL,\sE)\to \Omega^*(\sL,\sE).\]
If $\nabla'$ is an $\sL$-connection on $\sE$ extending $\nabla$, then by Leibniz rule the filtration 
$\sG^*_r(\sE)$ is preserved by $\nabla'$ and we can immediately see that the maps induced on 
the quotients $\sG^*_r(\sE)/\sG^*_{r+1}(\sE)$ are independent of $\nabla'$ and square-zero operators. Notice also that the curvature of $\nabla'$ belongs to $\sG^2_2(\END_{\Oh_X}(\sE))$ if and only if $[\nabla'_l,\nabla'_a]=\nabla'_{[l,a]}$ for every $l\in \sL$ and $a\in \sA$.

Since $\nabla$ always admits extensions locally (see Remark~\ref{rem.estensione-locale} below), for every $r$ there is a properly defined  structure of differential 
graded sheaf  on $\sG^*_r(\sE)/\sG^*_{r+1}(\sE)$.

It is interesting to point out that the groups  
$E_1^{p,q}=\HH^{q}\left(X,\sG^*_p/\sG^*_{p+1}[p]\right)$, and more generally the hypercohomology groups of 
$\sG^*_r(\sE)/\sG^*_{r+1}(\sE)$, are cohomology groups of $\sA$ with coefficients in suitable $\sA$-modules. In fact, 
there is a canonical $\sA$-module structure on the quotient sheaf $\sL/\sA$ given by the \emph{Bott connection}: denoting by $\pi\colon \sL\to\sL/\sA$ the projection, the connection is defined by the formula
\[ \nabla^B_a \pi(b) = \pi([a,b]),\qquad\forall\,a\in\sA,\; b\in\sL.\]

Therefore, there is a canonical $\sA$-module structure on $\bigwedge^r\left({\sL}/{\sA}\right)^\vee$ for every $r$.

\begin{lemma}\label{lem.bott0} Let $(\sL,\sA)$ be a Lie pair and let $\sE$ be an $\sA$-module. Then
for every $r \geq 1$, the differential graded sheaf  
$\dfrac{\sG_r^*(\sE)}{\sG^*_{r+1}(\sE)}[r]$ is isomorphic to the standard complex of the 
$\sA$-module $\bigwedge^r\left({\sL}/{\sA}\right)^\vee\otimes \sE$. In particular, the Leray spectral sequence of the pair $(\sL,\sA)$ is 
\[ E_1^{p,q}=\HH^{q}\left(\sA;{\bigwedge}^p\left({\sL}/{\sA}\right)^\vee\right).\]
\end{lemma}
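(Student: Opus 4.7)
The plan is to upgrade the graded-sheaf isomorphism (\ref{eq.iso_quozienti_Gp}) to an isomorphism of sheaves of differential graded $\Oh_X$-modules. Tensoring (\ref{eq.iso_quozienti_Gp}) with $\sE$ gives
\[
  \frac{\sG_r^*(\sE)}{\sG_{r+1}^*(\sE)}[r] \;\cong\; {\bigwedge}^r(\sL/\sA)^\vee \otimes \Omega^*(\sA) \otimes \sE \;=\; \Omega^*\!\left(\sA,\,{\bigwedge}^r(\sL/\sA)^\vee \otimes \sE\right)
\]
as graded $\Oh_X$-modules. What remains is to verify that, under this identification, the differential induced on the left-hand side by any local extension $\nabla'$ of $\nabla$ to an $\sL$-connection (whose existence and well-definedness are recalled in the paragraph preceding the statement) agrees with the standard-complex differential on the right-hand side, where $\bigwedge^r(\sL/\sA)^\vee\otimes\sE$ carries the tensor product of the Bott connection and $\nabla$.

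The claim is local and independent of the choice of $\nabla'$, so I would work on an affine open on which both a splitting $\sL=\sA\oplus Q$ with $Q\xrightarrow{\sim}\sL/\sA$ and an extension $\nabla'$ exist (cf.\ Remark~\ref{rem.estensione-locale}). The induced differential $D$ is a derivation with respect to the $\Omega^*(\sA)=\Omega^*(\sL)/\sG_1^*$-module structure on the associated graded, so it is determined by its restrictions to three families of generators: the subsheaf $\Oh_X$, where it must equal $d_\sA$; the subsheaf $\sE$ placed in bi-degree $(0,0)$, where $\nabla'(e)\equiv\nabla(e)$ modulo $\sG_1^1(\sE)$ forces $D|_\sE=\nabla$; and the top generators $\bigwedge^r Q^\vee$ sitting in $\sG_r^r/\sG_{r+1}^r$, which is the one genuine computation.

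For the last piece, I would write a local section $\xi\in\bigwedge^rQ^\vee$ as a convolution product $\xi_1\cdots\xi_r$ of sections $\xi_i\in Q^\vee\subset\Omega^1(\sL)$ extended by zero on $\sA$, apply the explicit de Rham formula for $d_\sL$, and evaluate modulo $\sG_{r+1}^{r+1}$ on arguments $(a_0,q_1,\ldots,q_r)$ with $a_0\in\sA$ and $q_i\in Q$. Any term that inserts an $\sA$-argument into an $\xi_i$-slot dies by the extension by zero; any term that would feed two $Q$-inputs into a single $\xi_i$-slot (coming from brackets $[q_i,q_j]$) has $\sL/\sA$-degree at least $r+1$ and vanishes modulo $\sG_{r+1}^{r+1}$; the surviving terms reassemble into $\sum_j\xi_1\wedge\cdots\wedge(\nabla^B_{a_0}\xi_j)\wedge\cdots\wedge\xi_r$, i.e.\ precisely the standard-complex differential of the induced Bott connection on $\bigwedge^r(\sL/\sA)^\vee$. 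Combined with Leibniz and the matches on $\Oh_X$ and $\sE$, this proves that $D$ coincides with the standard-complex differential of $\bigwedge^r(\sL/\sA)^\vee\otimes\sE$, and the intrinsic nature of both sides makes the local identifications glue to a global isomorphism of sheaves of DG $\Oh_X$-modules. The spectral-sequence statement $E_1^{p,q}=\HH^q(\sA;\bigwedge^p(\sL/\sA)^\vee)$ is then immediate from Definition~\ref{def.Leray_filtration} and Definition~\ref{def.Lie_alg_coh} applied with $\sE=\Oh_X$.

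The main obstacle is the bookkeeping in the computation above: the sign and index tracking needed to recognise the surviving contributions of $d_\sL(\xi_1\cdots\xi_r)$ as the dual Bott action, and the verification that the failure of $\nabla'$ to be $\sA$-compatible contributes nothing, because the resulting defect lives in a strictly higher piece of the Leray filtration (essentially $\sG_2^2(\END_{\Oh_X}(\sE))$, as already noted before the statement) and hence is killed by the quotient.
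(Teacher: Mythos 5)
Your proposal is correct and follows essentially the same route as the paper: both upgrade the graded isomorphism \eqref{eq.iso_quozienti_Gp} to a DG isomorphism by evaluating $d_{\sL}$ on representatives annihilating $\sA$ via the explicit de Rham formula, identifying the surviving terms with the dual Bott connection, extending by the Leibniz rule over $\Omega^*(\sA)$, and then tensoring with $\sE$ using a local extension $\nabla'$ whose defect dies in the higher filtration step. The only cosmetic difference is that you phrase the reduction through a local splitting $\sL=\sA\oplus Q$ and generators $\bigwedge^r Q^\vee$, whereas the paper does the degree-one computation and then propagates it through products $\omega_1\cdots\omega_k$.
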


\begin{proof}
	For every $r \geq 1$, 
	consider the isomorphism of graded sheaves 
	$ \varphi \colon \frac{\sG_r^*}{\sG^*_{r+1}}[r] \to \bigwedge^r\left({\sL}/{\sA}\right)^\vee \otimes \Omega^*(\sA)$ of \eqref{eq.iso_quozienti_Gp}.
	We begin by showing that this is an isomorphism of complexes, where  the differential on the left is induced by $d_{\sL}$, and the differential on the right is given by the dual connection to the Bott connection. 
	
		Denote by $\nabla^B$ the Bott connection on $\sL/\sA$, and by $\nabla^{B, \vee}$ the induced connection on $\bigwedge^r(\sL/\sA)^\vee$ for every $r \geq 0$.
	We 
	denote by $a_{\sL}$ and $a_{\sA}$ the anchor maps of $\sL$ and $\sA$ respectively.
	Finally, denote by $j $ the inclusion $ j \colon  \left(\frac{\sL}{\sA}\right)^\vee[-1] \to \Omega^1(\sL)$, and by $\pi$ the projection $\pi \colon \sL \to \frac{\sL}{\sA}$, so that 
	for $m \in \sL$ and $\eta \in \left(\frac{\sL}{\sA}\right)^\vee[-1]$ one has that 
	$  m \contr j(\eta) = (j(\eta))(m)= \eta (\pi(m)) = \pi(m) \contr \eta $.  
	For every $\eta \in {\sG_r^*}/{\sG^*_{r+1}} [r]$, we prove that
	\[ \varphi (d_{\sL} \eta) = \nabla^{B, \vee} \varphi( \eta).\] Firstly, consider $\omega \in {\sG_1^*}/{\sG^*_2} [1]\cong  (\sL/\sA)^\vee \otimes \Omega^*(\sA)$ of degree zero, so that 
	$\omega $ belongs to $\sG^1_1/\sG^1_2 [1]= \sG^1_1 [1] \cong (\sL/\sA)^\vee$. Then $d_{\sL} \omega $ belongs to $\sG^2_1[1]$, but we consider its projection to $\frac{\sG^2_1}{\sG^2_2}[1] \cong \left(\frac{\sL}{\sA}\right)^\vee \otimes \Omega^1(\sA)$.
	Hence we calculate it on $b \in \sA$ and $\pi(l) \in \frac{\sL}{\sA} $, obtaining
	\begin{align*}
	d_{\sL} \omega (b,\pi(l)) &= a_{\sL} (b) (j(\omega) (l)) - a_{\sL}(l)(j(\omega) (b)) - j(\omega )([b,l]) \\
	&=	 a_{\sA} (b) (\omega(\pi (l))) - a_{\sL}(l)(\omega (\pi(b))) - \omega (\pi([b,l]))\\
	& = a_{\sA} (b) (\omega(\pi (l)))  - \omega (\pi([b,l])),
	\end{align*}
	since $\pi(b)=0$.
	The connection $\nabla^{B, \vee}$ for $\omega \in \left(\frac{\sL}{\sA}\right)^\vee$, $b \in \sA$ and $\pi(l) \in  \frac{\sL}{\sA}$ is given by
	\begin{align*}
	\pi(l) \contr \nabla^{B, \vee}_b \omega &= d_{\sL} (\pi(l) \contr \omega) (b) - (\nabla^B_b \pi(l)) \contr \omega  =a_{\sL} (b) (\pi(l) \contr \omega) - (\pi ([b,l])) \contr \omega \\
	&= a_{\sA} (b) ( \omega(\pi(l))) - \omega(\pi ([b,l])),
	\end{align*}
	therefore $d_{\sL} \omega = \nabla^{B, \vee} \omega $.

	Consider now $\eta \in \frac{\sG_r^*}{\sG^*_{r+1}}[r]$ of degree $k - r  \geq 0$, which we can assume to be of the form $\eta = \omega_1 \cdots \omega_k$, with $\omega_i \in \Omega^1(\sL)[1]$ for $i=1, \ldots ,r$ such that $\varrho (\omega_1)= \cdots = \varrho(\omega_r)=0$ (i.e., $\omega_i \in (\sL/\sA)^\vee$ for $i=1, \ldots, r$) and $\omega_j \in \Omega^1(\sL)$ for $j= r+1, \ldots ,k$ such that  $\varrho(\omega_{r+1}), \ldots ,\varrho(\omega_k) \neq 0$. 
	
	Then we have that $$ \varphi \colon \frac{\sG_r^*}{\sG^*_{r+1}}[r] \to \bigwedge^r\left(\frac{\sL}{\sA}\right)^\vee\otimes \Omega^*(\sA), \quad \varphi (\eta)= \omega_1 \cdots \omega_r \otimes \varrho (\omega_{r+1})\cdots \varrho(\omega_k),$$
	and so
	\begin{align*}
	\nabla^{B, \vee} (\varphi (\eta )) &= \nabla^{B, \vee} ( \omega_1 \cdots \omega_r \otimes \varrho (\omega_{r+1})\cdots \varrho(\omega_k) )\\
	&= \sum_{i=1}^{r}  \omega_1 \cdots \nabla^{B, \vee}(\omega_i) \cdots\omega_r \otimes \varrho({\omega_{r+1}}) \cdots \varrho(\omega_k) \\
	&\quad+ \sum_{i=r+1}^k (-1)^{i-1-r} \omega_1 \cdots \omega_r \otimes \varrho (\omega_{r+1}) \cdots d_{\sA}(\varrho(\omega_i)) \cdots \varrho(\omega_k )  \\
	&= \sum_{i=1}^{r}  \omega_1 \cdots \nabla^{B, \vee}(\omega_i) \cdots\omega_r \otimes \varrho({\omega_{r+1}}) \cdots \varrho(\omega_k) \\
	&\quad+ \sum_{i=r+1}^k (-1)^{i-1-r} \omega_1 \cdots \omega_r \otimes \varrho (\omega_{r+1}) \cdots \varrho(d_{\sL}(\omega_i)) \cdots \varrho(\omega_k )\\
	&= \sum_{i=1}^{r}  \omega_1 \cdots d_{\sL}(\omega_i) \cdots\omega_r \otimes \varrho({\omega_{r+1}}) \cdots \varrho(\omega_k) \\
	&\quad+ \sum_{i=r+1}^k (-1)^{i-1-r} \omega_1 \cdots \omega_r \otimes \varrho (\omega_{r+1}) \cdots \varrho(d_{\sL}(\omega_i)) \cdots \varrho(\omega_k )\\
	&= (\Id \otimes \varrho) \left(\sum_{i=1}^r \omega_1 \cdots d_{\sL}(\omega_i) \cdots \omega_k + \sum_{i=r+1}^k (-1)^{i-1 -r} \omega_1 \cdots d_{\sL}(\omega_i) \cdots \omega_k\right)\\
	&= \varphi (d_{\sL}(\omega_1 \cdots \omega_k)) = \varphi(d_{\sL} (\eta)).
	\end{align*}

	For every $r \geq 1$, it  follows by \eqref{eq.iso_quozienti_Gp} and by the definition of $\sG_r^*(\sE)$ that
	 there is an isomorphism of graded sheaves
		\[ \varphi \otimes \Id_{\sE} \colon  \frac{\sG_r^*(\sE)}{\sG^*_{r+1}(\sE)}[r]\to \bigwedge^r\left({\sL}/{\sA}\right)^\vee \otimes \Omega^*(\sA)\otimes \sE.\]
	Denote by $\nabla$ the flat $\sA$-connection on $\sE$, and by $\nabla'$ a local extension of $\nabla$ to an $\sL$-connection on $\sE$, which is such that $(\varrho \otimes \Id) \nabla' = \nabla$ and which induces a differential on 	${\sG_r^*(\sE)}/{\sG^*_{r+1}(\sE)}[r]$.

	Take now $\eta \otimes e \in \sG^*_r (\sE) [r]= (\sG^*_r \otimes \sE)[r]$,
	then
	$\nabla' (\eta \otimes e)= d_{\sL}\eta \otimes e + (-1)^{|\eta|} \eta \otimes \nabla'(e)$, and \begin{align*}
	(\varphi \otimes \Id_{\sE}) (\nabla' (\eta \otimes e))&= \varphi(d_{\sL}\eta) \otimes e + (-1)^{|\eta|} \varphi (\eta) \otimes (\varphi \otimes \Id_{\sE}) \nabla'(e)\\
	&= \nabla^{B, \vee} (\varphi (\eta )) \otimes e+ (-1)^{|\eta|} \varphi (\eta) \otimes (\varphi \otimes \Id_{\sE}) \nabla'(e).
	\end{align*}
	Since \[ (\nabla^{B, \vee} \otimes \nabla )((\varphi \otimes \Id_{\sE})(\eta \otimes e)) = (\nabla^{B, \vee} \otimes \nabla )(\varphi(\eta) \otimes e)=\nabla^{B, \vee} (\varphi (\eta )) \otimes e+ (-1)^{|\eta|} \varphi (\eta) \otimes \nabla(e),\]
	it remains only to show that $(\varphi \otimes \Id_{\sE}) \nabla'(e)= \nabla (e)$ for every $e \in \sE$, which follows by the definition of $\varphi$ and by the fact that $(\varrho \otimes \Id) \nabla' = \nabla$, since $\nabla'$ is a local extension of $\nabla$.
\end{proof}

\begin{example}\label{ex.holomorphicLeray}
Let $f\colon X\to Y$ be a smooth morphism of irreducible smooth schemes. Then a Lie pair on $X$ is given by
$(\Theta_X,\Theta_f)$, where $\Theta_f=\HOM_{\sO_X}(\Omega_{X/Y},\sO_X)$ is the subsheaf of relative vector fields: 
since $f$ is smooth there exists an exact sequence of sheaves
\[ 0\to \Theta_f\to \Theta_X\to f^*\Theta_Y\to 0.\]
In this case $\Omega^*(\sL)=\Omega^*_X$ is the usual de Rham  complex of $X$, while 
$\Omega^*(\sA)=\Omega^*_{X/Y}$ is the relative de Rham complex and the filtration $\sG_r^*$ is the algebraic 
analogue of the holomorphic Leray filtration, see  \cite[17.2]{Voisin}, \cite[2.16]{zucker79}.

Since the relative de Rham differential is $f^{-1}\Oh_Y$-linear and $\sG_1^*$ is the ideal sheaf generated by $f^{-1}\Omega^1_Y$, for every 
$r$ we have a natural isomorphism of differential graded sheaves 
\[  \frac{\sG_r^*}{\sG^*_{r+1}}\cong f^{-1}\Omega^r_Y\otimes_{f^{-1}\Oh_Y} \Omega^*_{X/Y}\]
and therefore the first page of the Leray spectral sequence is
\[E_1^{p}=\HH^{*}\left(X,\sG^*_p/\sG^*_{p+1}\right)= 
\HH^{*}\left(X,f^{-1}\Omega^p_Y\otimes_{f^{-1}\Oh_Y} \Omega^*_{X/Y}\right)=
\HH^{*}\left(Y,\Omega^p_Y\otimes_{\Oh_Y} Rf_*\Omega^*_{X/Y}\right).\]

It is an easy consequence of Deligne's results on Hodge theory that if $X$ and $Y$ are complex projective manifolds, then 
the  Leray spectral sequence of the Lie pair $(\Theta_X,\Theta_f)$ degenerates at $E_1$.  In fact, by Hodge decomposition we have 
\[Rf_*\Omega^*_{X/Y}=\oplus_q R^qf_*\Omega^*_{X/Y}[-q]\simeq\oplus_q \Oh_Y\otimes_{\C}R^qf_*\C[-q],\] and then 
$E_1^p=\oplus_q H^*(Y,\Omega_Y^p\otimes_{\C} R^qf_*\C)[-p-q]$.
Since $R^qf_*\C$ is a local system with real structure and $Y$ is compact K\"{a}hler, according to \cite[2.11]{zucker79} (see also \cite[8.5]{GoMil1}), the cohomology  of   $\Omega_Y^p\otimes_{\C} R^qf_*\C$ is a direct summand of the cohomology of 
$R^qf_*\C$. Since the (topological) Leray spectral sequence of $Rf_*\C$  degenerates at $E_2$ \cite[2.6.2]{Deligne69}, we have that 
$E_1^p$ is a direct summand of $\HH^*(Y,Rf_*\C)=H^*(X,\C)=\HH^*(X,\Omega_X^*)$.

For every locally free sheaf $\sE$ on $Y$ its pull-back $f^*\sE=\sO_X\otimes_{f^{-1}\sO_Y}f^{-1}\sE$ has a natural structure of $\Theta_f$-module with connection 
\[ \nabla_\eta(g\otimes e)=\eta(g)\otimes e.\]
More generally, every $\Theta_f$-module can be interpreted,  as in \cite{BL}, as a locally free sheaf on $X$ 
which is endowed with a connection relative to $f$ that is flat.  
\end{example}

\bigskip
\section{Reduced Atiyah classes}
\label{sec.reducedAt}

For every Lie algebroid $\sL$ and every $\sO_X$-module $\sF$ we define the sheaf of $\sO_X$-modules
\[ 
P(\sL,\sF)=\{(l,\phi)\in \sL\times \HOM_{\K}(\sF,\sF)\mid \phi(fe)=f\phi(e)+a(l)(f)e,\; f\in \sO_X,\; e\in \sF\}.\]
If $\sF$ is coherent  then also $P(\sL,\sF)$ is coherent. This has been proved in \cite[Prop. 5.1]{DMcoppie} in the case $\sL=\Theta_X$, while for the general case it is sufficient to observe that 
$P(\sL,\sF)=P(\Theta_X,\sF)\times_{\Theta_X}\sL$.

Denoting by $p\colon P(\sL,\sF)\to \sL$ the projection on the first factor, we have two exact sequences of (graded) $\sO_X$-modules
\begin{equation}\label{equ.esattaatiyah} 
\begin{split}&0\to \HOM_{\sO_X}(\sF,\sF)\to P(\sL,\sF)\xrightarrow{p}\sL\,,\\
&0\to \Omega^1(\sL)\otimes \HOM_{\sO_X}(\sF,\sF)\to \Omega^1(\sL)\otimes  P(\sL,\sF)\xrightarrow{p}\Omega^1(\sL)\otimes \sL=\HOM_{\Oh_X}(\sL,\sL)[-1],\end{split}
\end{equation}
where the second sequence is obtained by applying the exact functor $\Omega^1(\sL)\otimes -$ to the first. Now and in the sequel, we will consider $\Id_{\sL}$ as a global section of  $\HOM_{\Oh_X}(\sL,\sL)[-1]$, a graded sheaf concentrated in degree $1$.

\begin{lemma}\label{lem.caratt_conn} 
	In the above setup, there exists a natural bijection between the set  
	of $\sL$-connections on $\sF$ and global sections  $D\in \Gamma(X,\Omega^1(\sL)\otimes P(\sL,\sF))$
	such that $p(D)=\Id_\sL$. 
\end{lemma}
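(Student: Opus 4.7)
The plan is to unwind the definitions on both sides and exhibit the bijection explicitly, with the Atiyah-style sheaf $P(\sL,\sF)$ playing its usual role as the sheaf of pairs (vector field, compatible first-order operator). The statement is essentially formal once we reinterpret $\sL$-connections via Remark~\ref{rem.derivata_covariante} and use the local freeness of $\sL$ to identify $\Omega^1(\sL)\otimes P(\sL,\sF)$ with a Hom sheaf.

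First, I would use local freeness of $\sL$ to produce the natural isomorphism
\[ \Omega^1(\sL)\otimes P(\sL,\sF)\cong \HOM_{\Oh_X}(\sL,P(\sL,\sF))[-1], \]
and similarly $\Omega^1(\sL)\otimes \sL\cong \HOM_{\Oh_X}(\sL,\sL)[-1]$, with $\Id_\sL$ corresponding under the latter to the identity endomorphism. Under these identifications the projection $p$ in \eqref{equ.esattaatiyah} becomes postcomposition with $p\colon P(\sL,\sF)\to \sL$, and so global sections $D$ with $p(D)=\Id_\sL$ correspond bijectively to $\Oh_X$-linear morphisms $s\colon \sL\to P(\sL,\sF)$ splitting $p$.

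Next I would unravel what such a splitting is concretely. By definition of $P(\sL,\sF)$, an $\Oh_X$-linear splitting $s$ of $p$ has the form $s(l)=(l,\nabla_l)$, where $l\mapsto \nabla_l\in \HOM_{\K}(\sF,\sF)$ is an $\Oh_X$-linear map satisfying
\[ \nabla_l(fe)=a(l)(f)e+f\nabla_l(e),\qquad f\in \sO_X,\ e\in \sF. \]
By Remark~\ref{rem.derivata_covariante} this datum is precisely equivalent to giving an $\sL$-connection $\nabla\colon \sF\to \Omega^1(\sL)\otimes \sF$, via the pairing $\nabla_l(e)=l\contr \nabla(e)$ (which is a bijection thanks to nondegeneracy of the contraction product).

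Assembling the two identifications yields the required bijection; naturality in $\sF$ is immediate since every step is functorial, and $\Oh_X$-linearity of $s$ matches $\Oh_X$-linearity of $l\mapsto \nabla_l$. There is no real obstacle here — the only point requiring care is bookkeeping the degree shift in $\Omega^1(\sL)$ so that $\Id_\sL$ is correctly viewed as a degree-$1$ global section of $\HOM_{\Oh_X}(\sL,\sL)[-1]$; once the shifts are aligned, the lemma reduces to the tautological identification of connections with splittings of the Atiyah-type short exact sequence.
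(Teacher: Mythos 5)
Your proof is correct and is essentially the paper's argument phrased invariantly: where you identify $\Omega^1(\sL)\otimes P(\sL,\sF)$ with $\HOM_{\Oh_X}(\sL,P(\sL,\sF))[-1]$ and read off sections over $\Id_\sL$ as $\Oh_X$-linear splittings $l\mapsto(l,\nabla_l)$ of $p$, the paper does the same computation in a local frame, writing $D=\sum_i\phi_i\otimes(l_i,D_i)$ and matching the Leibniz condition on the $D_i$ with membership in $P(\sL,\sF)$. Both reduce to Remark~\ref{rem.derivata_covariante} and the defining property of $P(\sL,\sF)$, so no further comment is needed.
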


\begin{proof}
	Let $l_1,\ldots,l_r$ be a local frame of $\sL$ with dual frame $\phi_1,\ldots,\phi_r\in \Omega^1(\sL)$. 
	Every $\K$-linear morphism $\nabla\colon \sF\to \Omega^1(\sL,\sF)$ can be written locally as 
	$\nabla=\sum_{i=1}^r \phi_i\cdot D_i$, with $D_i\in \HOM_{\K}(\sF,\sF)$. 
	By definition, $\nabla$ is a connection if and only if for every 
	$f\in \sO_X$, $e\in \sF$, and every $i$ we have 
	\[ D_i(fe)=l_i\contr\nabla(fe)=a(l_i)(f)e+fD_i(e)\]
	and this is equivalent to the fact that $\sum_{i=1}^r \phi_i\otimes (l_i,D_i)\in \Omega^1(\sL)\otimes P(\sL,\sF)$.
\end{proof}

\begin{lemma}
	If $\sF$ is a locally free sheaf, then the morphism $p\colon P(\sL,\sF)\to \sL$ is surjective.
\end{lemma}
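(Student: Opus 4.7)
The plan is to verify surjectivity locally on $X$, since surjectivity of a morphism of sheaves may be checked on stalks (or equivalently, on a sufficiently fine open cover). Since $\sF$ is assumed locally free, we may restrict to an open $U\subset X$ on which $\sF|_U\cong \sO_U^{\oplus n}$ admits an $\sO_U$-linear frame $e_1,\ldots,e_n$.

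Over such $U$, given an arbitrary local section $l\in \sL(U)$, I would construct an explicit lift $(l,\phi)\in P(\sL,\sF)(U)$ as follows: define the $\K$-linear endomorphism
\[ \phi\colon \sF|_U\to \sF|_U,\qquad \phi\!\left(\sum_{i=1}^n f_i e_i\right)=\sum_{i=1}^n a(l)(f_i)\,e_i,\qquad f_i\in \sO_U.\]
A direct check using the Leibniz rule for the derivation $a(l)\in \Theta_X(U)$ shows that $\phi(f e)=a(l)(f)e+f\phi(e)$ for every $f\in \sO_U$ and $e\in \sF|_U$, so $(l,\phi)$ defines an element of $P(\sL,\sF)(U)$ whose image under $p$ is $l$. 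This produces a local preimage of every section of $\sL$, proving surjectivity.

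Conceptually, the argument amounts to exhibiting a local $\sL$-connection on $\sF$ (the trivial one in the chosen frame) and invoking the spirit of Lemma~\ref{lem.caratt_conn}; the local existence of $\sL$-connections on a locally free sheaf is what makes $p$ surjective. There is no real obstacle here since the construction is entirely formal: the only subtlety is checking that the prescribed $\phi$ depends well on the chosen frame only up to an element of $\HOM_{\sO_X}(\sF,\sF)$, which is automatic in view of the exact sequence \eqref{equ.esattaatiyah}, and therefore does not affect surjectivity.
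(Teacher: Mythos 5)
Your proof is correct and follows essentially the same route as the paper's: both verify surjectivity locally by choosing a frame of $\sF$ and lifting $l$ to the pair $(l,\phi)$ where $\phi$ applies the derivation $a(l)$ coordinatewise. The closing remark about frame-dependence is not needed, since surjectivity of a morphism of sheaves is a local (stalkwise) condition and no glueing is required.
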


\begin{proof}
	We show this locally, with a proof similar to \cite[Lemma 3.1]{DMcoppie}. Let $R$ be a $\K$-algebra, let $(L, [-,-], a)$ be a Lie algebroid over $R$ with anchor map $a \colon L \to \Der_{\K}(R,R)$, and let $F$ be a free $R$-module with basis $\{e_i\}$. We set
	\[ P(L, F) = \{ (l,\phi) \in L \times \Hom_{\K}(F,F) \ |\ \phi (re)= r\phi(e) + a(l)(r)e, \ \forall r \in R,\ e \in F\},\]
	and show that the projection $p \colon P(L,F) \to L$ is surjective. For every $x \in L$, consider the derivation $a(x) \in \Der_{\K}(R,R)$, and set
	\[ w(\sum_i r_i e_i):= \sum_i a(x)(r_i)e_i, \quad r_i \in R.\] Then the pair $(x,w)$ belongs to $P(L, F)$.
\end{proof}

Assume now that $\sF$ is a locally free sheaf, so that the morphism 
$p\colon P(\sL,\sF)\to \sL$ is surjective  and we have an exact sequence of locally free graded sheaves of $\sO_X$-modules 
\[ 0\to \Omega^1(\sL)\otimes \HOM_{\sO_X}(\sE,\sE)\to \Omega^1(\sL)\otimes P(\sL,\sE)\xrightarrow{p}\HOM_{\Oh_X}(\sL,\sL)[-1]\to 0.\]

We can rewrite the above short exact sequence of graded sheaves concentrated in degree $1$ as a sequence of sheaves in degree $0$:
\[ 0\to \Omega^1(\sL)[1]\otimes \HOM_{\sO_X}(\sE,\sE)\to \Omega^1(\sL)[1]\otimes P(\sL,\sE)\xrightarrow{p}\HOM_{\Oh_X}(\sL,\sL)\to 0.\]
By Lemma~\ref{lem.caratt_conn}, there exists an $\sL$-connection on $\sE$ if and only if the identity on $\sL$ lifts to a global section of  $\Omega^1(\sL)[1]\otimes P(\sL,\sE)$. Writing
\[ \At_{\sL}(\sE)=\de(\Id_{\sL})\in H^1(X,\Omega^1(\sL)[1]\otimes \HOM_{\sO_X}(\sE,\sE))=
\Ext^1_X(\sL\otimes\sE,\sE),\]
where $\de$ is the connecting morphism in the cohomology long exact sequence, 
we have that $\At_{\sL}(\sE)=0$ if and only if there exists an $\sL$-connection on $\sE$.

Equivalently, we can define $\At_{\sL}(\sE)$ as the extension class of the short exact sequence 
\[ 0\to \Omega^1(\sL)\otimes \HOM_{\sO_X}(\sE,\sE)\to Q(\sL,\sE)\xrightarrow{p}\sO_X[-1]\to 0.\]
where, by definition, $Q(\sL,\sE)=p^{-1}(\sO_X[-1]\cdot \Id_{\sL})$.
More explicitly, in a local frame $l_1,\ldots,l_r$ of $\sL$, with dual frame $\phi_1,\ldots,\phi_r\in \Omega^1(\sL)$, 
the elements of $Q(\sL,\sE)$ are those of $\Omega^1(\sL)\otimes P(\sL,\sE)$ of  the form $\sum_{i=1}^r \phi_i\otimes (fl_i,D_i)$ for some $f\in \sO_X$.

\bigskip

Let now $(\sL,\sA)$ be a Lie pair on $X$.
Given an $\sA$-connection $\nabla\colon \sE\to \Omega^1(\sA,\sE)$ on $\sE$ locally free
it makes sense to ask whether $\nabla$ lifts to an $\sL$-connection or not. We prove  
that the solution to this problem is completely determined by an obstruction 
\[\de(\nabla)\in \Ext^1_X\left(\frac{\sL}{\sA}\otimes\sE,\sE\right)=\Ext^1_X\left(\sE,\sE\otimes \sG_1^1[1]\right).\]
It is possible to prove, by applying the results of 
\cite[Section 3]{DMcoppie} to 
an injective resolution, that the same holds also if $\sE$ is not locally free; however we don't need this result.

The case $\sA=0$ has been already considered. 
Suppose $\sA\not=0$, then  we have  a commutative diagram with exact rows
\[ \xymatrix{0\ar[r]&\Omega^1(\sL)\otimes \HOM_{\sO_X}(\sE,\sE)\ar[r]\ar[d]^{\alpha}&Q(\sL,\sE)\ar[r]^p\ar[d]^\beta
	&\sO_X[-1]\ar[r]\ar@{=}[d]&0\\
	0\ar[r]&\Omega^1(\sA)\otimes \HOM_{\sO_X}(\sE,\sE)\ar[r]&Q(\sA,\sE)\ar[r]^p&\sO_X[-1]\ar[r]&0}\]
where $\alpha,\beta$ are the natural restriction maps.
In a local frame $l_1,\ldots,l_r$ of $\sL$, 
with dual frame $\phi_1,\ldots,\phi_r\in \Omega^1(\sL)$ and such that $l_1,\ldots,l_s$ is a local frame for 
$\sA$, we have 
\[ \alpha(\sum_{i=1}^r \phi_i\otimes g_i)=\sum_{i=1}^s \phi_i\otimes g_i,\quad 
\beta(\sum_{i=1}^r \phi_i\otimes (fl_i,D_i))=\sum_{i=1}^s \phi_i\otimes (fl_i,D_i).\]

Since $\alpha$ and $\beta$ are surjective, by the snake lemma 
we have an exact sequence 
\begin{equation}\label{equ.shortpair} 
0\to \sG^1_1\otimes\HOM_{\sO_X}\left(\sE,\sE\right)\to Q(\sL,\sE) \xrightarrow{\beta} 
Q(\sA,\sE)\to 0,
\end{equation}
since  $\sG^1_1$ is by definition the kernel of the surjective map $\Omega^1(\sL)\to \Omega^1(\sA)$.
For simplicity we can rewrite the above short exact sequence of graded sheaves living in degree $1$ as a short exact sequence of sheaves in degree $0$:
\begin{equation*}
0\to \sG^1_1[1] \otimes \HOM_{\sO_X}\left(\sE,\sE\right)\to Q(\sL,\sE)[1] \xrightarrow{\beta} 
Q(\sA,\sE)[1]\to 0.
\end{equation*}
Then the $\sA$-connection $\nabla$ is an element
of $H^0(Q(\sA,\sE)[1])$ such that $p(\nabla)=1$, and the element
\[ \overline{\At}_{\sL/\sA}(\sE, \nabla):=\de(\nabla)\in  H^1\left(X,\sG^1_1[1] \otimes \HOM_{\sO_X}\left(\sE,\sE\right)\right)=
\Ext^1_X\left(\sE,\sE\otimes\sG^1_1[1]\right),\]
is the obstruction to lifting $\nabla$ to an $\sL$-connection. We will call this the \emph{reduced Atiyah class} of $(\sE, \nabla)$.
 
\bigskip
\section{Simplicial $\sL$-connections}

In this section, following \cite{lepri}, we define simplicial $\sL$-connections for a Lie algebroid $\sL$, and simplicial extensions of an $\sA$-connection for a Lie pair $(\sL,\sA)$.
We prove that the adjoint operator of a simplicial $\sL$-connection on a locally free sheaf $\sE$ induces a curved DG-algebra structure on $\Tot(\sU, \Omega^*(\sL) \otimes \HOM^*_{\Oh_X}(\sE,\sE))$. In the case of a Lie pair $(\sL,\sA)$ and of a simplicial extension of a flat $\sA$-connection $\nabla$ on $\sE$, we obtain the data of a curved DG-pair.
Simplicial connections allow us to give representatives of the classes $\At_{\sL}(\sE)$ and $\overline{\At}_{\sL/\sA}(\sE, \nabla)$, and a representative of the obstruction to extending a flat $\sA$-connection on $\sE$ to a $\sL$-connection on $\sE$ with curvature in $\sG_2^2\otimes\HOM_{\Oh_X}(\sE,\sE)$.

Let $\sL$ be a Lie algebroid on $X$ and $\sE$ a locally free sheaf. We have seen that 
$\sL$-connections on $\sE$ exist locally but in
general it does not exist any globally defined connection. However we can define a weaker notion of
connection, which always exists and equally gives a significative example of curved DG-algebra.

In the notation of Sections~\ref{sec.liealgebroid} and~\ref{sec.reducedAt}, consider the short exact sequence
\begin{equation}\label{eq.L-connessione}
	0\to \Omega^1(\sL)\otimes \HOM_{\sO_X}(\sE,\sE)\to \Omega^1(\sL)\otimes P(\sL,\sE)\xrightarrow{\,p\,}\Omega^1(\sL)\otimes\sL\to 0,\end{equation}
and recall that by Lemma~\ref{lem.caratt_conn} an $\sL$-connection on $\sE$ is a global section $D$ of $\Omega^1(\sL)\otimes P(\sL,\sE)$ such that $p(D)=\Id_{\sL}$, where $\Id_{\sL}$ is considered as a global section of $\Omega^1(\sL)\otimes \sL$.
Fix an affine open cover $\sU= \{ U_i\}$ of $X$; by the exactness of the Thom--Whitney totalisation functor one obtains a short exact sequence of DG-vector spaces
\begin{center}
	\begin{tikzcd}[cramped,column sep=small]
		0 \arrow[r] & \Tot(\sU, \Omega^1(\sL)\otimes \HOM_{\sO_X}(\sE,\sE)) \arrow[r] & \Tot(\sU, \Omega^1(\sL)\otimes P(\sL,\sE)) \arrow[r, "p"] & \Tot(\sU, \Omega^1(\sL)\otimes \sL ) \arrow[r] &0.
	\end{tikzcd}
\end{center}
Because of the natural inclusion \eqref{equ.inclusionsecriontot} of global sections in the totalisation, we can consider $\Id_{\sL}$ as an element of $\Tot(\sU, \Omega^1(\sL)\otimes \sL )$.
\begin{definition}
	A simplicial $\sL$-connection on $\sE$ is a lifting $\nabla$ in $\Tot(\sU, \Omega^1(\sL)\otimes P(\sL,\sE))$ of $\Id_{\sL}$ in $\Tot(\sU, \Omega^1(\sL)\otimes \sL )$.
\end{definition}
It is clear that a simplicial $\sL$-connection on $\sE$ always exists.

\medspace

In the case of a Lie pair $(\sL,\sA)$ and of an $\sA$-connection $\nabla^{\sA}$ on the locally free sheaf $\sE$, we can define an analogous notion of simplicial $\sL$-connection extending $\nabla^{\sA}$. It is not restrictive to assume $\sA\not=0$; then 
the exact sequence of locally free graded sheaves \eqref{equ.shortpair}
\[ 0\to \sG^1_1 \otimes \HOM_{\sO_X}\left(\sE,\sE\right)\to Q(\sL,\sE) \xrightarrow{\beta} 
Q(\sA,\sE)\to 0\]
induces
the short exact sequence of DG-vector spaces
\begin{equation}\label{eq.SuccEs_TotPairs}
	0\to \Tot\left(\sU, \sG^1_1 \otimes \HOM_{\sO_X}\left(\sE,\sE\right)\right)\to \Tot(\sU,Q(\sL,\sE))\xrightarrow{\beta}
	\Tot(\sU,Q(\sA,\sE))\to 0.
\end{equation}

We have already observed that an $\sA$-connection $\nabla^{\sA}$ on $\sE$ is a global section of $Q(\sA,\sE)$ such that $p (\nabla^{\sA})=1$, where $p \colon Q(\sA,\sE) \to \sO_X[-1]$ is induced by the map $p$ of \eqref{eq.L-connessione}. By the inclusion of global sections in the totalisation, $\nabla^{\sA}$ belongs to $ \Tot(\sU, Q(\sA,\sE))$.
\begin{definition}\label{def.simpl_ext_conn}
	By a simplicial extension of an $\sA$-connection $\nabla^{\sA}$ on $\sE$ we mean a lifting $\nabla$ in $\Tot(\sU, Q(\sL, \sE))$ of ${\nabla^{\sA}}$ in $ \Tot(\sU, Q(\sA,\sE))$.
\end{definition}

\begin{remark}\label{rem.estensione-locale}
	Notice that the exact sequence \eqref{equ.shortpair} implies that a local extension of an $\sA$-connection to an $\sL$-connection always exists.
\end{remark}

Since maps on the totalisation are induced locally, a similar argument to that of Lemma~\ref{lem.bott0} shows that every simplicial extension $\nabla'$ of a flat $\sA$-connection $\nabla^{\sA}$ on $\sE$ induces a differential on the complex $\Tot(\sU,\sG_r^*(\sE)/\sG_{r+1}^*(\sE)[r])$. We then have that 
$H^*(\Tot(\sU,\sG_r^*(\sE)/\sG_{r+1}^*(\sE)[r])) \cong \HH^*(X,\sG_r^*(\sE)/\sG_{r+1}^*(\sE)[r]) $ 
is isomorphic to the Lie algebroid cohomology of $\sA$ with coefficients in the $\sA$-module $\bigwedge^r(\sL/\sA)^\vee\otimes \sE$, again by Lemma~\ref{lem.bott0}.

\begin{lemma}\label{lem.ostruzioni_connessioni}
	For a Lie algebroid $\sL$ and a simplicial $\sL$-connection $\nabla$ on $\sE$, the cohomology class of $d_{\Tot}\nabla$ in $\Tot(\sU, \Omega^1(\sL) \otimes \HOM_{\Oh_X}(\sE,\sE))$ is the obstruction $\At_{\sL}(\sE)$ to the existence of an $\sL$-connection on $\sE$.
	
	For a Lie pair $(\sL, \sA)$ and a simplicial extension $\nabla$ of an $\sA$-connection $\nabla^{\sA}$ on $\sE$, the cohomology class of $d_{\Tot}\nabla$ in 
	$\Tot(\sU, \sG^1_1 \otimes \HOM_{\Oh_X}(\sE, \sE))$ is the obstruction  $ \overline{\At}_{\sL/\sA}(\sE, \nabla^{\sA})$ to the extension of $\nabla^{\sA}$ to an $\sL$-connection.
\end{lemma}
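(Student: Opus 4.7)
The plan is to recognise both statements as instances of the standard description of the connecting homomorphism of a short exact sequence of complexes, applied after taking the Thom--Whitney totalisation, and then to match the resulting class with the Atiyah class as defined earlier.

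First I would treat the Lie algebroid case. Recall from Example~\ref{ex.cech} and \eqref{equ.inclusionsecriontot} that global sections inject as cocycles into the totalisation, so $\Id_{\sL}$, viewed as a global section of $\Omega^1(\sL)\otimes\sL$ of total degree $1$, gives a cocycle in $\Tot(\sU,\Omega^1(\sL)\otimes\sL)$ whose cohomology class corresponds, under the isomorphism $H^*(\Tot(\sU,-))\cong\HH^*(X,-)$, precisely to the distinguished element whose image under the connecting homomorphism of the sequence \eqref{eq.L-connessione} is, by its definition, $\At_{\sL}(\sE)$. A simplicial $\sL$-connection $\nabla$ is by construction a lift of $\Id_{\sL}$ to $\Tot(\sU,\Omega^1(\sL)\otimes P(\sL,\sE))$. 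Since $p$ commutes with $d_{\Tot}$ and $d_{\Tot}\Id_{\sL}=0$, one obtains $p(d_{\Tot}\nabla)=0$, so $d_{\Tot}\nabla$ lives naturally inside the subcomplex $\Tot(\sU,\Omega^1(\sL)\otimes\HOM_{\Oh_X}(\sE,\sE))$, and by the standard description of the snake/connecting map its cohomology class represents $\partial[\Id_{\sL}]=\At_{\sL}(\sE)$. This is exactly the first statement.

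For the Lie pair case I would argue identically, starting from the short exact sequence \eqref{eq.SuccEs_TotPairs}. The $\sA$-connection $\nabla^{\sA}$ is a global section of $Q(\sA,\sE)$ (in total degree $1$), so by the same inclusion of global sections it is a cocycle in $\Tot(\sU,Q(\sA,\sE))$; its class is precisely the one whose image under the connecting map of the sequence \eqref{equ.shortpair} (and hence of \eqref{eq.SuccEs_TotPairs}) is $\overline{\At}_{\sL/\sA}(\sE,\nabla^{\sA})$ by definition. A simplicial extension $\nabla$, as in Definition~\ref{def.simpl_ext_conn}, is by construction a lift of $\nabla^{\sA}$ to $\Tot(\sU,Q(\sL,\sE))$. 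Applying $d_{\Tot}$ and using that $\beta$ commutes with the differential together with $d_{\Tot}\nabla^{\sA}=0$, we find $\beta(d_{\Tot}\nabla)=0$, so $d_{\Tot}\nabla$ lies in $\Tot(\sU,\sG_1^1\otimes\HOM_{\Oh_X}(\sE,\sE))$ and its cohomology class represents the connecting image of $[\nabla^{\sA}]$, which is the reduced Atiyah class.

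The only nontrivial technical point is checking that these inclusions of global sections do land in cocycles and are compatible with the connecting morphism on the $\Tot$-level. The former is immediate from the identity $\delta_0 i=\delta_1 i$ underlying \eqref{equ.inclusionsecriontot}; the latter is the statement that the canonical isomorphism $H^*(\Tot(\sU,-))\cong\HH^*(X,-)$ is functorial and sends boundary maps of short exact sequences of sheaves of complexes to the associated hypercohomology boundary maps, which is precisely the content of Whitney's integration theorem recalled in Example~\ref{ex.cech}. So the only real work is bookkeeping degrees and signs; no genuine obstacle arises, because both the Atiyah class and the class of $d_{\Tot}\nabla$ are tautologically defined as boundary images of the \emph{same} cocycle via \emph{parallel} short exact sequences.
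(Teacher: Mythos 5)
Your proposal is correct and follows essentially the same route as the paper: both arguments rest on the exactness of $\Tot$, the inclusion of global sections as cocycles via \eqref{equ.inclusionsecriontot}, and the observation that $p(d_{\Tot}\nabla)=0$ (resp.\ $\beta(d_{\Tot}\nabla)=0$) forces $d_{\Tot}\nabla$ into the kernel subcomplex. The only cosmetic difference is that you identify $[d_{\Tot}\nabla]$ with $\de[\Id_{\sL}]$ (resp.\ $\de[\nabla^{\sA}]$) by appealing to the compatibility of the $\Tot$-level connecting morphism with the hypercohomology connecting morphism, whereas the paper sidesteps that compatibility check by directly proving the vanishing equivalence --- the class of $d_{\Tot}\nabla$ is trivial if and only if a global (extending) connection exists --- using the identification $H^0(\Tot(\sU,\sF))\cong\Gamma(X,\sF)$.
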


\begin{proof} According to Example~\ref{ex.cech} we have natural isomorphisms
\[ \begin{split}
H^0(\Tot(\sU, \Omega^1(\sL)\otimes P(\sL,\sE)))&=\Gamma(X,\Omega^1(\sL)\otimes P(\sL,\sE)),\\
H^0(\Tot(\sU, \Omega^1(\sL)\otimes \HOM_{\Oh_X}(\sE,\sE)))&=\Gamma(X,\Omega^1(\sL)\otimes \HOM_{\Oh_X}(\sE,\sE)).\end{split}
\]

	Consider first the case of a simplicial $\sL$-connection $\nabla$ on $\sE$;
notice that $d_{\Tot}\nabla$ belongs to $\Tot(\sU, \Omega^1(\sL) \otimes \HOM_{\Oh_X}(\sE,\sE))$, because $p (d_{\Tot}\nabla) = d_{\Tot} p (\nabla)= d_{\Tot} \Id_{\sL}=0$, since 
	$\Id_{\sL}$ is a global section.  If there exists an $\sL$-connection $\nabla'$ on $\sE$ it belongs to $\Tot(\sU, \Omega^1(\sL)\otimes P(\sL,\sE))$ by the inclusion of global sections in the totalisation, and one has that $d_{\Tot} \nabla'=0$. Then for any simplicial connection $\nabla$, the difference $\nabla -\nabla'$ belongs to
	$\Tot(\sU,  \Omega^1(\sL)\otimes \HOM_{\Oh_X}(\sE,\sE))$ and $d_{\Tot}  (\nabla -\nabla')= d_{\Tot}\nabla$, so that $d_{\Tot}\nabla $ is trivial in the cohomology of $\Tot(\sU,  \Omega^1(\sL)\otimes \HOM_{\Oh_X}(\sE,\sE))$. Conversely, if $d_{\Tot}\nabla = d_{\Tot}\varphi$, with $\varphi \in \Tot(\sU,  \Omega^1(\sL)\otimes \HOM_{\Oh_X}(\sE,\sE))$, then $\nabla - \varphi$ is a global $\sL$-connection on $\sE$.

	In the case of a Lie pair $(\sL, \sA)$ and a  simplicial extension $\nabla$ of an $\sA$-connection $ \nabla^{\sA}$ on $\sE$, notice that $d_{\Tot}\nabla$ belongs to $\Tot\left(\sU, \sG^1_1 \otimes \HOM_{\sO_X}\left(\sE,\sE\right)\right)$: in fact, $\beta (d_{\Tot}\nabla)= d_{\Tot}\beta(\nabla)= d_{\Tot} \nabla^{\sA}=0$, because $\nabla^{\sA}$ is a global section.
	If $\nabla^{\sA}$ extends to an $\sL$-connection there exists $\nabla'$ in $\Gamma(X,  Q(\sL,\sE))$ with $\beta(\nabla')= \nabla^{\sA}$, which is such that $d_{\Tot}\nabla' =0$ in $\Tot(\sU, Q(\sL,\sE))$, because it is a global section. Then for every simplicial connection $\nabla$ lifting $\nabla^{\sA}$, $\nabla - \nabla'$ belongs to the kernel of $\beta$, which is 
	$\Tot(\sU, \sG_1^1 \otimes \HOM_{\sO_X}\left(\sE,\sE\right))$, and
	$ d_{\Tot}(\nabla - \nabla')= d_{\Tot}\nabla,$
	so that $d_{\Tot}\nabla$ is trivial in cohomology.  Vice versa, if $d_{\Tot}\nabla= d_{\Tot}\phi$ is trivial in the cohomology of $\Tot(\sU, \sG^1_1 \otimes \HOM_{\Oh_X}(\sE, \sE))$, it is easy to see that $\nabla - \phi$ is a connection lifting $\nabla^{\sA}$.
\end{proof}

A simplicial $\sL$-connection on a locally free sheaf $\sE$ induces a curved DG-algebra structure on the DG-vector space $\Tot(\sU, \Omega^*(\sL, \HOM_{\sO_X}(\sE,\sE)))$. To see this, the first step is the construction of an adjoint operator for the simplicial connection, which is done via the following lemma.

\begin{lemma}\label{lem.bracket} In the above situation, the $\Oh_X$-bilinear map 
	\[ \begin{split}
	[-,-]& \colon (\Omega^1(\sL)\otimes P(\sL,\sE)) \times \HOM_{\sO_X}(\sE,\sE) \to \Omega^1(\sL)\otimes \HOM_{\sO_X}(\sE,\sE),\\[5pt]
[\eta \otimes (l,v), g]&= \eta \otimes [v,g],\qquad \eta \in \Omega^1(\sL),\, (l,v) \in P(\sL, \sE),\, g \in \HOM_{\Oh_X}(\sE,\sE).\end{split}
\]
is well defined.	
\end{lemma}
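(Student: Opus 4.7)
The statement has two pieces to verify: (i) that the formula $\eta\otimes[v,g]$ actually lands in $\Omega^1(\sL)\otimes \HOM_{\sO_X}(\sE,\sE)$, i.e.\ that $[v,g]$ is $\sO_X$-linear whenever $(l,v)\in P(\sL,\sE)$ and $g\in \HOM_{\sO_X}(\sE,\sE)$; and (ii) that the assignment descends from the product $(\Omega^1(\sL)\times P(\sL,\sE))\times\HOM_{\sO_X}(\sE,\sE)$ to $(\Omega^1(\sL)\otimes_{\sO_X} P(\sL,\sE))\times\HOM_{\sO_X}(\sE,\sE)$ and is $\sO_X$-bilinear in the first pair of arguments.

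The plan for (i) is a direct verification using the defining property of $P(\sL,\sE)$. For $f\in \sO_X$ and $e\in\sE$,
\[
[v,g](fe)=v(g(fe))-g(v(fe))=v(fg(e))-g\bigl(fv(e)+a(l)(f)e\bigr),
\]
where in the first term I used that $g$ is $\sO_X$-linear. Expanding $v(fg(e))$ via the relation $v(f\cdot-)=f\,v(-)+a(l)(f)(-)$ and using $\sO_X$-linearity of $g$ in the second term, the two occurrences of $a(l)(f)g(e)$ cancel and one is left with $f[v,g](e)$. Hence $[v,g]\in\HOM_{\sO_X}(\sE,\sE)$.

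For (ii), it suffices to check compatibility with the tensor relation $f\eta\otimes(l,v)=\eta\otimes f\cdot(l,v)=\eta\otimes(fl,fv)$ for $f\in\sO_X$. Note that $(fl,fv)$ genuinely lies in $P(\sL,\sE)$: one has $(fv)(he)=fv(he)=fhv(e)+fa(l)(h)e=h(fv)(e)+a(fl)(h)e$. Using the $\sO_X$-linearity of $g$,
\[
[fv,g]=(fv)g-g(fv)=f(vg)-f(gv)=f[v,g],
\]
so $\eta\otimes[fv,g]=\eta\otimes f[v,g]=f\eta\otimes[v,g]$, which is the required equality. $\sO_X$-bilinearity in the remaining slots (the $\Omega^1(\sL)$-factor on the left and $\HOM_{\sO_X}(\sE,\sE)$ on the right) is immediate from the formula. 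I do not expect any real obstacle here: the content is a one-line computation verifying that the Leibniz term $a(l)(f)(-)$ cancels against $\sO_X$-linearity of $g$, exactly as in the classical fact that the commutator of a first-order differential operator with an $\sO_X$-linear endomorphism has order zero.
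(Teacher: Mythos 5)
Your proof is correct and follows essentially the same route as the paper: first check that the Leibniz terms $a(l)(f)g(e)$ cancel so that $[v,g]$ is $\sO_X$-linear, then verify compatibility with the tensor relation $f\eta\otimes(l,v)=\eta\otimes(fl,fv)$ via $[fv,g]=f[v,g]$. The only (harmless) addition is your explicit check that $(fl,fv)\in P(\sL,\sE)$, which the paper leaves implicit.
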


\begin{proof}
	For $r \in \Oh_X$, 
	\begin{align*} [v,g](re) &= v (r g(e)) - g (r v(e) + a (l) (r)e)= rvg(e) + a(l)(r)g(e)- rgv(e)- a (l)(r)g(e)\\
		&= r [v,g](e),
	\end{align*}
	so $[v,g]$ belongs to $\HOM_{\Oh_X}(\sE,\sE)$.
	The bracket is well-defined: for $r \in \Oh_X$,
	\begin{align*}
		&[\eta \otimes (rl, rv),g] = \eta \otimes [rv, g]= \eta \otimes r[v,g]=r \eta \otimes [v,g]=[r\eta \otimes (l,v), g ].
	\end{align*}
\end{proof}

The bracket defined in Lemma~\ref{lem.bracket} induces a graded Lie bracket on the totalisation
\[ [-,-] \colon \Tot(\sU, \Omega^1(\sL)\otimes P(\sL,\sE)) \times \Tot(\sU, \HOM_{\Oh_X}(\sE,\sE)) \to \Tot(\sU,\Omega^1(\sL)\otimes \HOM_{\Oh_X}(\sE,\sE)),\]
which allows to define the adjoint operator to a simplicial $\sL$-connection $\nabla$  on $\sE$:
\begin{equation}\label{eq.operatoreaggiunto}
	d_{\nabla} := [\nabla, -] \colon \Tot(\sU, \HOM_{\Oh_X}(\sE,\sE)) \to \Tot(\sU,\Omega^1(\sL)\otimes \HOM_{\Oh_X}(\sE,\sE)).
\end{equation}

Recall that since $\Omega^*(\sL,\HOM_{\Oh_X}(\sE,\sE))$ is a sheaf of graded algebras  and the Tot functor preserves multiplicative structures,  $\Tot(\sU,\Omega^*(\sL,\HOM_{\Oh_X}(\sE,\sE)))$ is a differential graded algebra, with differential denoted by $d_{\Tot}$.

\begin{lemma}\label{lem.curvedalgebra}
	The adjoint operator 
	\[d_{\nabla} = [\nabla, -] \colon \Tot(\sU, \HOM_{\Oh_X}(\sE,\sE)) \to \Tot(\sU,\Omega^1(\sL)\otimes \HOM_{\Oh_X}(\sE,\sE))\] 
	extends for every $i \geq 0$ to a $\K$-linear operator 
	$$d_{\nabla} \colon \Tot(\sU,\Omega^i(\sL)\otimes \HOM_{\Oh_X}(\sE, \sE)) \to \Tot(\sU, \Omega^{i+1}(\sL)\otimes \HOM_{\Oh_X}(\sE, \sE) ).$$	
	Then $(\Tot(\sU,\Omega^*(\sL, \HOM_{\Oh_X}(\sE, \sE))), d_{\Tot} + d_\nabla)$ is a curved DG-algebra with curvature
	\[  d_{\Tot} \nabla + C,\]
	with $d_{\Tot}\nabla \in \Tot(\sU, \Omega^1(\sL)\otimes \HOM_{\Oh_X}(\sE,\sE))$ and $C \in \Tot(\sU,\Omega^2(\sL)\otimes \HOM_{\Oh_X}(\sE, \sE))$  such that $d_{\nabla}^2 = [C, -]$.
\end{lemma}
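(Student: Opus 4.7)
The plan is to reduce the statement to two independent verifications: (i) extend the bracket of Lemma~\ref{lem.bracket} to the whole graded algebra so that $d_\nabla=[\nabla,-]$ is a genuine degree-1 derivation, and (ii) compute $(d_{\Tot}+d_\nabla)^2$ piece by piece and exhibit the curvature element.

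For step (i), I would first promote the bracket of Lemma~\ref{lem.bracket} to an $\Oh_X$-bilinear pairing
\[ [-,-]\colon \bigl(\Omega^p(\sL)\otimes P(\sL,\sE)\bigr)\times \bigl(\Omega^q(\sL)\otimes \HOM_{\Oh_X}(\sE,\sE)\bigr)\to \Omega^{p+q}(\sL)\otimes \HOM_{\Oh_X}(\sE,\sE), \]
defined by $[\eta\otimes(l,v),\mu\otimes g]=(-1)^{|v||\mu|}\eta\mu\otimes [v,g]$, relying on the fact that $P(\sL,\sE)$ is a sheaf of $\K$-Lie algebras via the componentwise bracket $[(l,v),(m,w)]=([l,m],[v,w])$ and that $\HOM_{\Oh_X}(\sE,\sE)\subset P(\sL,\sE)$ is a Lie ideal. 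Applying $\Tot$, which preserves tensor products and multiplicative structures, this yields the operator $d_\nabla=[\nabla,-]$ on $\Tot(\sU,\Omega^*(\sL,\HOM_{\Oh_X}(\sE,\sE)))$; its being a degree-1 graded derivation of the associative product is a Koszul-sign calculation encoded by the graded Jacobi identity of the $P$-side bracket.

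For step (ii), I split $(d_{\Tot}+d_\nabla)^2$ into three pieces. The identity $d_{\Tot}^2=0$ is tautological. The cross term $d_{\Tot}d_\nabla+d_\nabla d_{\Tot}$ equals $[d_{\Tot}\nabla,-]$ because $d_{\Tot}$ is a derivation of the associative product and commutes with the $\Oh_X$-bilinear bracket above; by Lemma~\ref{lem.ostruzioni_connessioni}, the element $d_{\Tot}\nabla$ lies in the subspace $\Tot(\sU,\Omega^1(\sL)\otimes \HOM_{\Oh_X}(\sE,\sE))$, so this is indeed an inner derivation via an element of the $\HOM$-component. For the remaining piece $d_\nabla^2$, graded Jacobi for the extended bracket gives $d_\nabla^2=\frac{1}{2}[[\nabla,\nabla],-]$ where a priori $\frac{1}{2}[\nabla,\nabla]\in \Tot(\sU,\Omega^2(\sL)\otimes P(\sL,\sE))$. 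However, since the bracket $[(l,v),g]=[v,g]$ on $\HOM_{\Oh_X}(\sE,\sE)$ only involves the endomorphism component, I would define $C$ locally in a frame $l_i$ of $\sL$ dual to $\phi_i$ by writing $\nabla=\sum\eta_i\otimes(l_i,v_i)$ and setting $C=\frac{1}{2}\sum_{i,j}\eta_i\eta_j\otimes [v_i,v_j]\in \Omega^2(\sL)\otimes\HOM_{\Oh_X}(\sE,\sE)$. Coherence under the $\Tot$-structure maps is inherited from $\nabla$, and $d_\nabla^2=[C,-]$ by construction on generators, then on the full algebra by the derivation property.

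The main obstacle is the Koszul-sign bookkeeping and checking that the locally defined $C$ assembles into a genuine element of the totalisation; both reduce to the observation that all constructions are $\Oh_X$-bilinear and glue under the simplicial structure maps $\delta_k^*$ because $\nabla$ itself does. Setting $R=d_{\Tot}\nabla+C$ then gives $(d_{\Tot}+d_\nabla)^2=[R,-]$, which is the required relation. The remaining condition $(d_{\Tot}+d_\nabla)R=0$ from Definition~\ref{def.curved-algebra} is the abstract Bianchi identity: applying $d_{\Tot}+d_\nabla$ to $[R,x]=(d_{\Tot}+d_\nabla)^2 x$ and using the derivation property on both sides forces $[(d_{\Tot}+d_\nabla)R,x]=0$ for all $x$, and a direct unpacking in a local frame (or the observation that $R$ is a sum of commutators of $\nabla$ with itself) shows that $(d_{\Tot}+d_\nabla)R=0$ holds on the nose, not just up to the center.
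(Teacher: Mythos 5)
Your overall architecture --- define everything componentwise on the \v{C}ech--Thom--Whitney model, reduce to a local computation, split $(d_{\Tot}+d_\nabla)^2$ into three pieces, and close with a Bianchi identity --- matches the paper's proof. However, there is a genuine error in your step (i) that propagates into step (ii). The extension of $d_\nabla$ to $\Omega^i(\sL)\otimes \HOM_{\Oh_X}(\sE,\sE)$ is \emph{not} the $\Omega^*(\sL)$-bilinear pairing $[\eta\otimes(l,v),\mu\otimes g]=\pm\,\eta\mu\otimes[v,g]$ you propose: the correct formula (the paper's \eqref{eq.connessione}) is
\[ d_\nabla(\mu\otimes g)\;=\;d_{\sL}(\mu)\otimes g+(-1)^{|\mu|}\,\mu\otimes[\nabla,g], \]
namely the operator commutator of the \emph{extended connection operator} on $\Omega^*(\sL,\sE)$ with the $\Omega^*(\sL)$-linear operator $\mu\otimes g$. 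The summand $d_{\sL}(\mu)\otimes g$ is forced because $\nabla$ is a first-order operator, and your pairing, being $\Oh_X$-bilinear (indeed $\Omega^*(\sL)$-linear) in the second slot, cannot produce it. Dropping it yields the wrong differential (for instance the induced differential on the quotient by $\Tot(\sU,\sG_1^*\otimes\END_{\Oh_X}(\sE))$ would no longer be the standard-complex differential used in Theorem~\ref{thm.Deformazioni}), and it immediately breaks your curvature computation.

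Concretely: in a local frame $l_1,\dots,l_r$ of $\sL$ one has in general $[l_i,l_j]\neq 0$, so the commutator $[v_i,v_j]$ is a first-order differential operator with symbol $a([l_i,l_j])$ and therefore does \emph{not} belong to $\HOM_{\Oh_X}(\sE,\sE)$; hence your $C=\tfrac12\sum_{i,j}\eta_i\eta_j\otimes[v_i,v_j]$ is not an element of $\Omega^2(\sL)\otimes\HOM_{\Oh_X}(\sE,\sE)$. Moreover, with your bracket $d_\nabla^2(\mu\otimes g)$ involves terms $[\nabla_{[l_i,l_j]},g]$, i.e.\ covariant derivatives of $g$, which are not inner derivations, so $d_\nabla^2\neq[C,-]$ for any $C$ in $\Omega^2(\sL)\otimes\HOM_{\Oh_X}(\sE,\sE)$. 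The true curvature component is $[v_i,v_j]-\nabla_{[l_i,l_j]}$ (cf.\ Remark~\ref{rem.derivata_covariante}); the subtracted term is produced exactly by the $d_{\sL}$-summand you omitted and is what makes the curvature $\Oh_X$-linear. The repair is the paper's route: extend a germ of a connection $Y$ to $\Omega^*(\sL,\sE)$ by the Leibniz rule, observe that $Y^2$ is $\Oh_X$-linear and set $C$ equal to (the totalisation of) $Y^2$, define $d_\nabla$ on higher form degrees by the displayed formula, and then $d_\nabla^2=[C,-]$ and the Bianchi identity $d_\nabla(d_{\Tot}\nabla)=-\tfrac12 d_{\Tot}[\nabla,\nabla]=-d_{\Tot}C$ go through essentially as you outline.
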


\begin{proof}
	Consider first the case of a germ of an $\sL$-connection, i.e., an element $Y$ of $\Gamma(V,\Omega^1(\sL) \otimes P(\sL,\sE))$ such that $p(Y)= \Id_{\sL}|_V$, for some open set $V \subset X$. 
	As usual, $Y$ extends uniquely to a $\K$-linear morphism of degree 1
	\[ Y \colon \Omega^*(\sL,\sE)|_V \to \Omega^*(\sL,\sE)|_V\] such that
	\begin{equation*}
		Y(\eta \otimes e)= d_{\sL}(\eta)\otimes e+ (-1)^{|\eta |} \eta \otimes Y(e).
	\end{equation*}
	for all $\eta\in\Omega^*(\sL)|_V, e \in \sE|_V$. It is easy to see that the map $Y^2$
	is $\sO_X$-linear,
	so it can be identified with a section  of $\Omega^2(\sL, \HOM_{\sO_X}(\sE,\sE))|_V$.
	
	One can define an adjoint operator
	\[ d_Y:= [Y,-] \colon \HOM_{\Oh_X}(\sE,\sE)|_V \to \Omega^1(\sL, \HOM_{\Oh_X}(\sE,\sE))|_V,\]
	which can be extended for all $i \geq 0$ to an operator 
	$$d_Y  \colon \Omega^i(\sL, \HOM_{\Oh_X}(\sE,\sE))|_V \to \Omega^{i+1}(\sL,\HOM_{\Oh_X}(\sE,\sE))|_V$$ 
	by setting
	\begin{equation}\label{eq.connessione}
		d_Y (\eta \otimes f):= d_{\sL} (\eta )\otimes f + (-1)^{|{\eta}|}\eta \otimes [Y, f],
	\end{equation}
	where $[Y, f]$ denotes the Lie bracket of Lemma~\ref{lem.bracket}. 
	
	As in the classical case, one can see that 
	\begin{equation}\label{eq.quadrato}
		d_Y^2 (\eta \otimes f)= [Y^2, \eta \otimes f]
	\end{equation}for all $\eta \in \Omega^*(\sL)|_V$ and $f \in \HOM_{\Oh_X}(\sE, \sE)|_V$.

	Let now $\nabla$ be a simplicial $\sL$-connection on $\sE$, namely an element of $\Tot(\sU, \Omega^1(\sL) \otimes P(\sL,\sE))$ such that $p(\nabla)= \Id_{\sL} \in \Tot(\sU, \Omega^1(\sL)\otimes \sL)$. 
	Then for every $i \geq 0$ the extension of the  operator $d_\nabla = [\nabla,-]$, defined in \eqref{eq.operatoreaggiunto}, to an operator
	$$d_\nabla = [\nabla, -] \colon  \Tot(\sU,\Omega^i(\sL)\otimes \HOM_{\Oh_X}(\sE, \sE)) \to \Tot(\sU, \Omega^{i+1}(\sL)\otimes \HOM_{\Oh_X}(\sE, \sE) )$$
	can be defined by using the map induced by \eqref{eq.connessione} on the totalisation, and one obtains a degree one operator
	$$d_\nabla = [\nabla, -] \colon  \Tot(\sU,\Omega^*(\sL, \HOM_{\Oh_X}(\sE, \sE))) \to \Tot(\sU, \Omega^{*}(\sL, \HOM_{\Oh_X}(\sE, \sE))).$$
	In detail, let $\nabla= (D_n)$ with $D_n \in A_n \otimes \prod_{i_1, \ldots, i_n} (\Omega^1(\sL)\otimes P(\sL,\sE)) (U_{i_1, \ldots, i_n})$ such that $p(D_n)= 1\otimes(\Id_{\sL}|_{U_{i_1, \ldots, i_n}})$ for every $n\geq 0$. Since maps on the totalisation are defined componentwise, it is enough to define the bracket
	\[[D_n, \phi_n \otimes (\omega_{i_1, \ldots, i_n} \otimes f_{i_1, \ldots, i_n})],\]
	for $\phi_n \otimes (\omega_{i_1, \ldots, i_n} \otimes f_{i_1, \ldots, i_n})$ in $A_n \otimes \prod_{i_1, \ldots, i_n} (\Omega^i(\sL)\otimes \HOM_{\Oh_X}(\sE,\sE)(U_{i_1, \ldots, i_n})$. Let  \begin{equation}\label{eq.Dn}
		D_n = \sum_{j} \eta_{j,n} \otimes (t_{j,i_1, \ldots, i_n}),\quad \eta_{j,n} \in A_n,\quad t_{j,i_1, \ldots, i_n} \in (\Omega^1(\sL)\otimes P(\sL,\sE))(U_{i_1, \ldots, i_n});
	\end{equation}then the bracket can be  defined as
	\begin{align*}
		&[D_n, \phi_n \otimes (\omega_{i_1, \ldots, i_n} \otimes f_{i_1, \ldots, i_n})]= [\sum_{j} \eta_{j,n} \otimes (t_{j,i_1, \ldots, i_n}), \phi_n \otimes (\omega_{i_1, \ldots, i_n} \otimes f_{i_1, \ldots, i_n})]\\
		&=p(D_n) ( \phi_n \otimes (\omega_{i_1, \ldots, i_n} \otimes f_{i_1, \ldots, i_n}))\\
		&\quad + (-1)^{|{\phi_n}| + |{\omega_{i_1,\ldots, i_n}}|}  \sum_j \phi_n \eta_{j,n} \otimes (\omega_{i_1, \ldots, i_n} \otimes [t_{j,i_1, \ldots, i_n}, f_{i_1, \ldots, i_n}])\\
		&=(1\otimes(\Id_{\sL}|_{U_{i_1, \ldots, i_n}})) ( \phi_n \otimes (\omega_{i_1, \ldots, i_n} \otimes f_{i_1, \ldots, i_n})) \\
		&\quad +(-1)^{|{\phi_n}| + |{\omega_{i_1,\ldots, i_n}}|}  \sum_j \phi_n \eta_{j,n} \otimes (\omega_{i_1, \ldots, i_n} \otimes [t_{j,i_1, \ldots, i_n}, f_{i_1, \ldots, i_n}])\\
		&=(-1)^{|{\phi_n}|}\phi_n \otimes (d_{\sL}\omega_{i_1, \ldots, i_n} \otimes f_{i_1, \ldots, i_n}) \\
		&\quad  +(-1)^{|{\phi_n}| + |{\omega_{i_1,\ldots, i_n}}|}  \sum_j \phi_n \eta_{j,n} \otimes (\omega_{i_1, \ldots, i_n} \otimes [t_{j,i_1, \ldots, i_n}, f_{i_1, \ldots, i_n}]), 
	\end{align*} 
where the bracket $[t_{j,i_1, \ldots, i_n}, f_{i_1, \ldots, i_n}]$ is induced by the one of Lemma~\ref{lem.bracket}.

	For every $i \geq 0$ the simplicial $\sL$-connection $\nabla$ also induces a map
	$$\nabla \colon \Tot(\sU, \Omega^i(\sL)\otimes \sE) \to\Tot(\sU, \Omega^{i+1}(\sL)\otimes \sE) $$
	which allows to define a degree one operator $\nabla\colon \Tot(\sU, \Omega^*(\sL,  \sE)) \to \Tot(\sU, \Omega^*(\sL,  \sE))$.
	In fact, let $\nabla= (D_n)$ as in \eqref{eq.Dn}, and consider $\phi_n \otimes (\omega_{i_1, \ldots, i_n} \otimes e_{i_1, \ldots, i_n})$ in $A_n \otimes \prod_{i_1, \ldots, i_n} (\Omega^1(\sL)\otimes \sE)(U_{i_1, \ldots, i_n})$. Then the operator can be defined as
	\[\begin{split}
		D_n (\phi_n \otimes (\omega_{i_1, \ldots, i_n} \otimes e_{i_1, \ldots, i_n}))&= (\sum_{j} \eta_{j,n} \otimes (t_{j,i_1, \ldots, i_n})) (\phi_n \otimes (\omega_{i_1, \ldots, i_n} \otimes e_{i_1, \ldots, i_n}))\\
			&= p(D_n) (\phi_n \otimes (\omega_{i_1, \ldots, i_n} \otimes e_{i_1, \ldots, i_n}))+\\
		&\quad  +(-1)^{ |{\phi_n}|+ |{\omega_{i_1,\ldots, i_n}}|}  \sum_j \phi_n \eta_{j,n} \otimes (\omega_{i_1 \ldots i_n} \otimes t_{i_1, \ldots, i_n}(e_{i_1, \ldots, i_n}))\\
			&= (1\otimes(\Id_{\sL}|_{U_{i_1, \ldots, i_n}})) (\phi_n \otimes (\omega_{i_1, \ldots, i_n} \otimes e_{i_1, \ldots, i_n}))+\\
		&\quad  +(-1)^{ |{\phi_n}|+ |{\omega_{i_1,\ldots, i_n}}|}  \sum_j \phi_n \eta_{j,n} \otimes (\omega_{i_1 \ldots i_n} \otimes t_{i_1, \ldots, i_n}(e_{i_1, \ldots, i_n}))\\
		&= (-1)^{|{\phi_n}|} \big(\phi_n \otimes (d_{\sL}\omega_{i_1, \ldots, i_n} \otimes e_{i_1, \ldots, i_n})\\
		&\quad  +(-1)^{ |{\omega_{i_1,\ldots, i_n}}|}  \sum_j \phi_n \eta_{j,n} \otimes (\omega_{i_1 \ldots i_n} \otimes t_{i_1, \ldots, i_n}(e_{i_1, \ldots, i_n}))\big).
	\end{split}\]

	Since all the maps considered on the totalisation are induced by the ones defined locally on the complexes of sheaves, for  $d_\nabla = [\nabla, -]$  one has that, by \eqref{eq.quadrato},
	\[ d_\nabla^2 = [C, -], \quad C \in \Tot(\sU, \Omega^2(\sL,\HOM_{\Oh_X}(\sE,\sE))).\]
	Then $d_{\Tot}+ d_{\nabla} $ is a degree one derivation of $\Tot(\sU, \Omega^*(\sL)\otimes \HOM_{\Oh_X}(\sE,\sE))$, with square
	\[ (d_{\Tot}+ d_\nabla)^2= d_{\Tot}^2 + d_{\Tot}[\nabla,-] +  [\nabla,d_{\Tot}-] + d_{\nabla}^2 = [d_{\Tot}\nabla, - ]+ [C,-]= [d_{\Tot}\nabla + C,-],\]
	so the curvature is $d_{\Tot}\nabla + C$.	We have already seen in Lemma~\ref{lem.ostruzioni_connessioni} that $d_{\Tot}\nabla$ belongs to $\Tot(\sU, \Omega^1(\sL) \otimes \HOM_{\Oh_X}(\sE,\sE))$.

	The last thing to prove is that $(d_{\Tot}+ d_{\nabla})(d_{\Tot}\nabla + C)=0$. One has that
	\[(d_{\Tot}+ d_{\nabla})(d_{\Tot}\nabla + C)= d_{\Tot}^2 \nabla + d_{\nabla} d_{\Tot}\nabla + d_{\Tot}C + d_{\nabla}C = d_{\nabla} d_{\Tot}\nabla + d_{\Tot}C. \]
	Then 
	\begin{align*}
		&d_{\nabla} d_{\Tot}\nabla = [\nabla, d_{\Tot} \nabla] = - [d_{\Tot}\nabla , \nabla] = - \frac{1}{2} d_{\Tot}[\nabla, \nabla] = - d_{\Tot}C,
	\end{align*}
	so that $(d_{\Tot}+ d_{\nabla})(d_{\Tot}\nabla + C)=0$.
\end{proof}

\medspace

In the case of a Lie pair $(\sL,\sA)$ and a locally free sheaf $\sE$, the  natural surjective restriction maps
\[ \varrho \colon \Omega^*(\sL)\to\Omega^*(\sA),\qquad \varrho \otimes \Id \colon \Omega^*(\sL,\HOM_{\sO_X}(\sE,\sE))\to\Omega^*(\sA,\HOM_{\sO_X}(\sE,\sE)),\]
induce morphisms on the totalisation
\[ \varrho \colon \Tot(\sU,\Omega^*(\sL))\to\Tot(\sU,\Omega^*(\sA)),\] \[\varrho \otimes \Id\colon\Tot(\sU,\Omega^*(\sL,\HOM_{\sO_X}(\sE,\sE)))\to\Tot(\sU,\Omega^*(\sA,\HOM_{\sO_X}(\sE,\sE))),\]
whose kernels define bilateral ideals
\[ \Tot(\sU,\sG^*_1)=\ker(\varrho)\subset\Tot(\sU,\Omega^*(\sL)),\]
\[\Tot(\sU,\sG^*_1 \otimes \HOM_{\sO_X}(\sE,\sE))=\ker(\varrho \otimes \Id)\subset\Tot(\sU,\Omega^*(\sL,\HOM_{\sO_X}(\sE,\sE))).\]

\begin{lemma}\label{lem.idealecurvato}
	Let $(\sE,\nabla^{\sA})$ be a locally free $\sA$-module, and let $\nabla$ be a simplicial extension of $\nabla^{\sA}$ to an $\sL$-connection. Then $I:=\Tot(\sU,\sG^*_1\otimes \HOM_{\sO_X}(\sE,\sE))$ is a curved ideal of the curved DG-algebra $$(\Tot(\sU,\Omega^*(\sL, \HOM_{\Oh_X}(\sE, \sE))), d_{\Tot} + d_{\nabla}, d_{\Tot}\nabla+ C),$$
	where $C$, the curvature of the simplicial connection $\nabla$, belongs to $\Tot(\sU,\sG^2_1 \otimes \HOM_{\sO_X}(\sE,\sE))$ and $d_{\Tot} \nabla $ belongs to $\Tot\left(\sU,\sG^1_1 \otimes \HOM_{\sO_X}\left(\sE,\sE\right)\right)$.
\end{lemma}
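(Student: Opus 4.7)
The statement decomposes into three checks: (a) $I=\Tot(\sU,\sG_1^*\otimes\HOM_{\sO_X}(\sE,\sE))$ is a homogeneous bilateral ideal stable under the differential $d_{\Tot}+d_\nabla$; (b) the first curvature summand $d_{\Tot}\nabla$ lies in $\Tot(\sU,\sG_1^1\otimes\HOM_{\sO_X}(\sE,\sE))$; (c) the second summand $C$ lies in the still smaller subspace $\Tot(\sU,\sG_1^2\otimes\HOM_{\sO_X}(\sE,\sE))$. Once (a)--(c) are proven, the definition of curved ideal (Definition~\ref{def.curved-ideal}) will be verified: $I$ is stable under the derivation and contains the curvature, since $\sG_1^1,\sG_1^2\subset \sG_1^*$.

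For (a), the surjective restriction $\varrho\colon\Omega^*(\sL)\to\Omega^*(\sA)$ is a morphism of sheaves of commutative DGAs, so its kernel $\sG_1^*$ is simultaneously a bilateral ideal for the product and a subcomplex for $d_\sL$. Tensoring with $\HOM_{\sO_X}(\sE,\sE)$ and applying the exact functor $\Tot$ preserves both properties, so $I$ is a homogeneous bilateral ideal of $\Tot(\sU,\Omega^*(\sL,\HOM_{\sO_X}(\sE,\sE)))$ stable under $d_{\Tot}$. For $d_\nabla=[\nabla,-]$ the check is local and componentwise: by formula~\eqref{eq.connessione} applied to a local lift $Y$ of $\nabla$, one has $d_Y(\eta\otimes f)=d_\sL(\eta)\otimes f+(-1)^{|\eta|}\eta\cdot[Y,f]$, and for $\eta\in\sG_1^*$ both summands again land in $\sG_1^{*+1}\otimes\HOM_{\sO_X}(\sE,\sE)$ because $\sG_1^*$ is closed under $d_\sL$ and is an ideal for the wedge product with arbitrary forms. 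Passing to the Thom--Whitney totalisation, we conclude that $d_\nabla$ preserves $I$.

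Claim (b) is just a restatement of the second part of Lemma~\ref{lem.ostruzioni_connessioni}: since $\nabla$ is a simplicial extension of the global section $\nabla^{\sA}$, we have $\beta(d_{\Tot}\nabla)=d_{\Tot}\beta(\nabla)=d_{\Tot}\nabla^{\sA}=0$ in $\Tot(\sU,Q(\sA,\sE))$, which exhibits $d_{\Tot}\nabla$ as an element of the kernel of $\beta$, i.e., of $\Tot(\sU,\sG_1^1\otimes\HOM_{\sO_X}(\sE,\sE))$.

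The nontrivial step is (c), and this is where the flatness hypothesis on $\nabla^{\sA}$ enters. By Remark~\ref{rem.estensione-locale}, on each open set $V\subset X$ we may choose an honest $\sL$-connection $Y$ lifting $\nabla^{\sA}|_V$; by the construction in the proof of Lemma~\ref{lem.curvedalgebra}, the local component of $C$ is represented by $Y^2\in\Omega^2(\sL,\HOM_{\sO_X}(\sE,\sE))|_V$, and under the restriction $\varrho\otimes\Id$ this maps to $(\varrho\otimes\Id)(Y^2)=(\varrho(Y))^2=(\nabla^{\sA}|_V)^2=0$, since $\nabla^{\sA}$ is flat. Hence $Y^2\in\sG_1^2\otimes\HOM_{\sO_X}(\sE,\sE)|_V$. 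Because the adjoint operator $d_\nabla$, and therefore the element $C$ with $d_\nabla^2=[C,-]$, is defined componentwise on the totalisation from these local data, the containment globalises to $C\in\Tot(\sU,\sG_1^2\otimes\HOM_{\sO_X}(\sE,\sE))$. The main obstacle I expect is precisely this compatibility check: one must verify that the formula extracting $C$ from $d_\nabla^2$ commutes with the componentwise restriction maps induced by $\varrho\otimes\Id$, so that vanishing of $\varrho(Y^2)$ on each local lift $Y$ yields the claimed vanishing of $(\varrho\otimes\Id)C$ at the totalisation level.
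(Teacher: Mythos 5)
Your proposal is correct and follows essentially the same route as the paper: ideal-stability of $I=\ker(\varrho\otimes\Id)$ under the product and under $d_{\Tot}+d_\nabla$, the identification of $d_{\Tot}\nabla$ via Lemma~\ref{lem.ostruzioni_connessioni}, and the observation that flatness of $\nabla^{\sA}$ forces $(\varrho\otimes\Id)(C)=(\nabla^{\sA})^2=0$, hence $C\in\Tot(\sU,\sG^2_1\otimes\HOM_{\sO_X}(\sE,\sE))$. The only cosmetic imprecision is in step (c), where the components of the simplicial connection are $A_n$-valued rather than honest local connections $Y$; but since, as you note, all operators on the totalisation are induced componentwise from the sheaf-level formulas, the restriction of the curvature is still the curvature of $\beta(\nabla)=\nabla^{\sA}$, and the argument goes through exactly as in the paper.
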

\begin{proof}
	It is clear that  the ideal $I= \Tot(\sU,\sG^*_1\otimes \HOM_{\sO_X}(\sE,\sE))$
	is  $d_{\Tot}$-closed. Let $x$ be an element of $I$, so that $(\varrho \otimes \Id )(x)=0$, then
	\[ (\varrho \otimes \Id) (d_{\nabla}x) =d_{\nabla^{\sA}} (\varrho \otimes \Id) (x) =0,\]
	so $I$ is also $ d_{\nabla}$-closed.
	Since the $\sA$-connection $\nabla^{\sA}$ is flat, the curvature   $C$  of $\nabla$ belongs to $\Tot(\sU,\sG^2_1 \otimes \HOM_{\sO_X}(\sE,\sE)) \subset I$, which is the kernel of the surjective map
	\[ \varrho \otimes \Id \colon \Tot(\sU, \Omega^2(\sL) \otimes \HOM_{\Oh_X}(\sE,\sE)) \to \Tot(\sU, \Omega^2(\sA) \otimes \HOM_{\Oh_X}(\sE,\sE)).
	\] 
	By Lemma~\ref{lem.ostruzioni_connessioni}, 
	$d_{\Tot} \nabla$ belongs to $\Tot\left(\sU,\sG^1_1 \otimes \HOM_{\sO_X}\left(\sE,\sE\right)\right)$,
	therefore it belongs to the ideal $I$. 
\end{proof}

For the ideal $I= \Tot(\sU, \sG^*_1 \otimes \HOM_{\Oh_X}(\sE,\sE))$ we have that
\begin{equation}\label{equ.potenzeideale}
I^{(n)}= \Tot(\sU, \sG^*_n \otimes \HOM_{\Oh_X}(\sE,\sE)).\end{equation}
In fact, the inclusion $I^{(n)} \subset \Tot(\sU, \sG^*_n \otimes \HOM_{\Oh_X}(\sE,\sE))$ is clear. For the other one, it suffices to notice that the multiplication map 
$\underbrace{\sG^*_1 \otimes \cdots \otimes \sG^*_1 }_n\to \sG_n^*$ is surjective on all affine open sets.

According to Definition~\ref{def.atiyahclass}, the Atiyah cocycle of the curved DG-pair 
\[ \left(A= \Tot(\sU,\Omega^*(\sL, \HOM_{\Oh_X}(\sE, \sE))), I= \Tot(\sU,\sG^*_1\otimes \HOM_{\sO_X}(\sE,\sE)) \right)\]
is the class of the curvature $R = d_{\Tot}\nabla + C$ in \[\frac{I}{I^{(2)}} = \Tot\left(\sU, \frac{\sG^*_1}{\sG^*_2}\otimes \HOM_{\sO_X}(\sE,\sE)\right).\]

\begin{theorem}\label{thm.ostruzione_connessione_piatta}
	Given a Lie pair $(\sL,\sA)$ and a locally free $\sA$-module $(\sE, \nabla^{\sA})$,
	the Atiyah class  $\At(A,I)$ of the curved DG-pair 	\[ (A=\Tot(\sU,\Omega^*(\sL, \HOM_{\Oh_X}(\sE, \sE))), I= \Tot(\sU,\sG^*_1\otimes \HOM_{\sO_X}(\sE,\sE)) )\]
	does not depend on the choice of the simplicial $\sL$-connection extending $\nabla^{\sA}$. Moreover, it is the obstruction to the existence of a $\sL$-connection on $\sE$ extending $\nabla^{\sA}$ with curvature in $ \sG_2^2\otimes\HOM_{\Oh_X}(\sE,\sE)$. 
\end{theorem}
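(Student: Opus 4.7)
The plan is to address the two claims of the theorem separately — independence of $\At(A,I)$ from the choice of simplicial extension, and its interpretation as an obstruction — with Lemma~\ref{lem.curvatura_astratta} doing the heavy lifting in both cases.

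For independence, given two simplicial extensions $\nabla, \nabla'$ of $\nabla^{\sA}$, their difference $x := \nabla' - \nabla$ lies in the kernel of $\beta$ in~\eqref{eq.SuccEs_TotPairs}, which equals $\Tot(\sU, \sG^1_1 \otimes \HOM_{\Oh_X}(\sE,\sE))$, so $x$ is a degree-one element of $I$. A direct computation resting on the $\Oh_X$-bilinearity of the bracket in Lemma~\ref{lem.bracket} will then show that the curved DG-algebra produced from $\nabla'$ via Lemma~\ref{lem.curvedalgebra} is exactly the twist by $x$ of the one produced from $\nabla$: the differential transforms as $(d_{\Tot}+d_\nabla) + [x,-]$, and since $C_{\nabla+x} = \tfrac{1}{2}[\nabla+x,\nabla+x] = C_\nabla + d_\nabla x + \tfrac{1}{2}[x,x]$, the curvature transforms as $R \mapsto R + (d_{\Tot}+d_\nabla)(x) + \tfrac{1}{2}[x,x]$. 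Lemma~\ref{lem.curvatura_astratta} will then yield equality of the two Atiyah classes.

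For the obstruction property, suppose first that a global $\sL$-connection $\nabla'$ extends $\nabla^{\sA}$ with $(\nabla')^2 \in \Gamma(X, \sG^2_2 \otimes \HOM_{\Oh_X}(\sE,\sE))$. Taking $\iota(\nabla')$ from~\eqref{equ.inclusionsecriontot} as the simplicial extension, one has $d_{\Tot}\iota(\nabla') = 0$, so by Lemma~\ref{lem.curvedalgebra} the curvature reduces to $C = \iota((\nabla')^2) \in \Tot(\sU, \sG^2_2 \otimes \HOM) \subset I^{(2)}$, forcing $\At(A,I)$ to vanish. Conversely, if $\At(A,I) = 0$, Lemma~\ref{lem.curvatura_astratta} produces $x \in I^1$ with $R_x \in I^{(2)}$. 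Since $\sG^0_1 = 0$, one checks that $I^1 \subset \Tot(\sU, \sG^1_1 \otimes \HOM_{\Oh_X}(\sE,\sE))$, so we may form $\nabla + x$, which by the independence computation is itself a simplicial extension of $\nabla^{\sA}$ whose curved-DG curvature equals $R_x$. Writing $R_x = d_{\Tot}(\nabla + x) + C_{\nabla+x}$ and using~\eqref{equ.potenzeideale} together with $\sG^1_2 = 0$, the form-degree-one piece $d_{\Tot}(\nabla+x)$ must vanish, while the form-degree-two piece $C_{\nabla+x}$ must lie in $\Tot(\sU, \sG^2_2 \otimes \HOM_{\Oh_X}(\sE,\sE))$.

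To promote the cocycle $\nabla + x$ to an honest global section, I will use that $\Omega^1(\sL) \otimes P(\sL, \sE)$ is concentrated in degree $1$, so $\Tot^0$ of this graded sheaf vanishes. Any $d_{\Tot}$-cocycle in $\Tot^1$ must then be of the form $\iota(\nabla')$ for a unique $\nabla' \in \Gamma(X, \Omega^1(\sL) \otimes P(\sL,\sE))$: writing such a cocycle componentwise as $(\phi_n \otimes v_n)_{n \geq 0}$ with $\phi_n \in A_n^0$, the cocycle condition forces each $\phi_n$ to be $d_{A_n}$-closed, hence a constant in $\K$, and the simplicial compatibility then forces $v_n = \delta_0^n v_0$ with $v_0$ a global section. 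The identities $p(\nabla+x) = \Id_{\sL}$ and $\beta(\nabla+x) = \nabla^{\sA}$ transfer to $\nabla'$, so $\nabla'$ is an $\sL$-connection extending $\nabla^{\sA}$, and the constraint $C_{\nabla+x} = \iota((\nabla')^2)$ gives $(\nabla')^2 \in \Gamma(X, \sG^2_2 \otimes \HOM_{\Oh_X}(\sE,\sE))$. The main obstacle I anticipate is precisely this final bookkeeping step: tracking how the form-degree stratification of $I^{(2)}$ decouples the condition $R_x \in I^{(2)}$ into the vanishing of $d_{\Tot}(\nabla+x)$ and the pointwise curvature condition on $C_{\nabla+x}$, and recognising that the resulting cocycle is literally in the image of $\iota$ rather than merely representing a cohomology class.
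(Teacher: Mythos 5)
Your proposal is correct and follows essentially the same route as the paper: independence via twisting by $x=\nabla'-\nabla\in\Tot(\sU,\sG^1_1\otimes\HOM_{\Oh_X}(\sE,\sE))$ and Lemma~\ref{lem.curvatura_astratta}, and the obstruction statement via the form-degree decomposition of $R_x\in I^{(2)}$ using $\sG^0_1=\sG^1_2=0$. The only (harmless) variation is at the end: you globalise the curvature $C_{\nabla+x}$ by identifying the $d_{\Tot}$-closed element $\nabla+x$ with $\iota(\nabla')$ and invoking functoriality, whereas the paper deduces $d_{\Tot}C_x=0$ from the Bianchi identity $d_{\nabla+x}(C_x)=0$; both are valid.
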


\begin{proof}
The difference of two simplicial extensions  $\nabla$ and $\nabla'$ of the $\sA$-connection $\nabla^{\sA}$ belongs to the ideal $I$. In fact, 
	  considering the short exact sequence \eqref{eq.SuccEs_TotPairs},
	\begin{equation*}
		0\to \Tot\left(\sU, \sG^1_1 \otimes \HOM_{\sO_X}\left(\sE,\sE\right)\right)\to \Tot(\sU,Q(\sL,\sE))\xrightarrow{\beta}
		\Tot(\sU,Q(\sA,\sE))\to 0,
	\end{equation*}
	we have that $\beta (\nabla -\nabla')= \nabla^{\sA}-\nabla^{\sA}=0$ and therefore, writing  $\phi := \nabla - \nabla' $, we have   $\phi\in \Tot\left(\sU, \sG^1_1 \otimes \HOM_{\sO_X}\left(\sE,\sE\right)\right) \subset I$.
	Then $d_{\nabla} = d_{\nabla'} + [\phi, -]$ and	the first claim follows from Lemma~\ref{lem.curvatura_astratta}.

	Next, we show that the Atiyah class $\At(A,I)$ of the curved DG-pair is the obstruction to the existence of a $\sL$-connection on $\sE$ extending $\nabla^{\sA}$,  with curvature in $\sG_2^2\otimes\HOM_{\Oh_X}(\sE,\sE)$. By Lemma~\ref{lem.curvatura_astratta}, $\At(A,I)$ is the obstruction to existence of $x \in I= \Tot(\sU,\sG^*_1\otimes \HOM_{\Oh_X}(\sE,\sE))$ of degree 1 such that $ R + (d_{\Tot}+ d_{\nabla})x  $ belongs to $I^{(2)}= \Tot(\sU, \sG^*_2 \otimes \HOM_{\Oh_X}(\sE,\sE))$. Assume that there exists such $x$, and notice that by degree reasons it belongs to  $\Tot(\sU,\sG^1_1\otimes \HOM_{\Oh_X}(\sE,\sE))$, since $\sG^0_1=0$.
	Then, since $\sG^1_2=0$,
	\[ \begin{split}
		&d_{\Tot} \nabla + d_{\Tot}x \ \in I^{(2)} \cap \Tot(\sU,\sG^1_1\otimes \HOM_{\Oh_X}(\sE,\sE))=0\\
		&C + d_{\nabla} x \ \in I^{(2)} \cap \Tot(\sU,\sG^2_1\otimes \HOM_{\Oh_X}(\sE,\sE))= \Tot(\sU,\sG^2_2\otimes \HOM_{\Oh_X}(\sE,\sE)),
	\end{split}\]
	and by the first equation $\nabla + x$ is a global $\sL$-connection on $\sE$ extending $\nabla^{\sA}$.
	
		We denote by  $R_x= d_{\Tot}(\nabla + x) +C_x =C_x$ the curvature of the curved DG-algebra $(A, d_{\Tot} + d_{\nabla +x})$. Then
		\[ R_x = R + (d_{\Tot}+ d_{\nabla})x + \dfrac{1}{2}[x,x] = d_{\Tot}\nabla + C+ d_{\Tot}x+ d_{\nabla}x + \dfrac{1}{2}[x,x] =  C+ d_{\nabla}x + \dfrac{1}{2}[x,x] , \]
		so that the curvature of $\nabla + x$ is equal to $C_x = C + d_{\nabla}x + \frac{1}{2}[x,x]$, which belongs to
$\Tot(\sU, \sG^2_2 \otimes \HOM_{\Oh_X}(\sE,\sE))$. Finally, since $d_{\nabla +x} (C_x) =0$, one has that 
\[ 0 = (d_{\Tot} + d_{\nabla +x})(R_x)=  (d_{\Tot} + d_{\nabla +x})(C_x) = d_{\Tot} C_x,\]
and $C_x$ is a global section of $\sG_2^2\otimes\HOM_{\Oh_X}(\sE,\sE)$.

	The converse is clear.
\end{proof}

By the above, the Atiyah class $\At(A,I)$ of the curved DG-pair \[ (A=\Tot(\sU,\Omega^*(\sL, \END_{\Oh_X}(\sE))), I= \Tot(\sU,\sG^*_1\otimes \END_{\Oh_X}(\sE)) )\] is well-defined: 
\[ \At(A,I) \in \HH^2 \left(X, \dfrac{\sG^*_1}{\sG^*_2} \otimes \END_{\Oh_X}(\sE)\right).\]

\begin{definition} In the above situation, via the isomorphisms of Lemma~\ref{lem.bott0}, we call 
\[\At_{\sL/\sA}(\sE):= \At(A,I) \in \HH^1 \left(\sA;(\sL/\sA)^\vee \otimes \END_{\Oh_X}(\sE)\right).\]
the $(\sL,\sA)$-Atiyah class of $\sE$.
\end{definition}

\begin{remark}
	Recalling that $\sG^1_2=0$, 
	the morphism of graded sheaves $ t  \colon \frac{\sG^*_1}{\sG^*_2} \to \sG^{1}_1$ with kernel $\frac{\sG_1^{\geq 2}}{\sG_2^*}$ induces a morphism of DG-vector spaces
	\[ t \colon \Tot\left(\sU, \dfrac{\sG^*_1}{\sG^*_2} \otimes \END_{\Oh_X}(\sE) \right) \to \Tot(\sU, \sG^1_1 \otimes \END_{\Oh_X}(\sE)),\]
	which sends the class of $R = d_{\Tot}\nabla + C$ to $d_{\Tot}\nabla$. The reduced Atiyah class $\overline{\At}_{\sL/\sA}(\sE,\nabla^{\sA})$ is then the image of the Atiyah class $\At_{\sL/\sA}(\sE)$ of the curved DG-pair 
	\[\left(A=\Tot(\sU,\Omega^*(\sL, \END_{\Oh_X}(\sE))), I= \Tot(\sU,\sG^*_1\otimes \END_{\Oh_X}(\sE)) \right)\] 
via the map induced by $t$ in hypercohomology
	\begin{align*} t \colon \HH^* \left(X, \dfrac{\sG^*_1}{\sG^*_2} \otimes \END_{\Oh_X}(\sE)\right) &\to \HH^*(X, \sG^1_1 \otimes \END_{\Oh_X}(\sE))\\
		\At_{\sL/\sA}(\sE) &\mapsto \overline{\At}_{\sL/\sA}(\sE,\nabla^{\sA}).
	\end{align*}
In particular if $\At_{\sL/\sA}(\sE)$ is trivial, then so is  $\overline{\At}_{\sL/\sA}(\sE,\nabla^{\sA})$.

If we consider the Lie pair $(\sL,0)$, both the obstructions $\At_{\sL/\sA}(\sE)$ and $ \overline{\At}_{\sL/\sA}(\sE,\nabla^{\sA})$ reduce to the obstruction $\At_{\sL}(\sE)$ to the existence of an $\sL$-connection on $\sE$. 
\end{remark}

\begin{corollary}\label{cor.criteriosvanimento}
	Let $(\sL,\sA)$ be a Lie pair on $X$ such that there exists an $\Oh_X$-linear projection $p \colon \sL \to \sA$ which commutes with anchor maps and with  adjoint Lie actions of $\sA$. Then for every $\sA$-module $\sE$ the Atiyah class $\At_{\sL/\sA}(\sE)$ is trivial.
\end{corollary}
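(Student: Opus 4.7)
The plan is to use the projection $p\colon \sL \to \sA$ to build an explicit global $\sL$-connection on $\sE$ extending $\nabla^{\sA}$ whose curvature already lies in $\sG_2^2\otimes\END_{\Oh_X}(\sE)$, and then to invoke Theorem~\ref{thm.ostruzione_connessione_piatta}.

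First, I would define the candidate $\sL$-connection $\nabla'$ on $\sE$ by the formula
\[ \nabla'_l(e) := \nabla^{\sA}_{p(l)}(e),\qquad l\in \sL,\; e\in \sE. \]
Two verifications are in order. Writing $a_{\sL}$ and $a_{\sA}$ for the anchors, the hypothesis $a_{\sA}\circ p = a_{\sL}$ together with the Leibniz rule for $\nabla^{\sA}$ give
\[ \nabla'_l(fe)= a_{\sA}(p(l))(f) e + f \nabla^{\sA}_{p(l)}(e) = a_{\sL}(l)(f) e + f \nabla'_l(e), \]
so $\nabla'$ is an honest $\sL$-connection. Since $p$ is a projection onto $\sA$, we have $p(a)=a$ for every $a\in \sA$ and thus $\nabla'_a = \nabla^{\sA}_a$: $\nabla'$ extends $\nabla^{\sA}$.

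Next I would check that the curvature $R'(l,m) = [\nabla'_l,\nabla'_m]-\nabla'_{[l,m]}$ lies in $\sG_2^2\otimes\END_{\Oh_X}(\sE)$, i.e. vanishes as soon as one argument is in $\sA$. Fix $l\in \sL$ and $a\in \sA$. By the hypothesis that $p$ commutes with the adjoint $\sA$-action, $p([a,l])=[a,p(l)]$, and using the flatness of $\nabla^{\sA}$ on the pair $p(l),a\in \sA$ one gets
\[ [\nabla'_l,\nabla'_a] - \nabla'_{[l,a]} = [\nabla^{\sA}_{p(l)},\nabla^{\sA}_a] - \nabla^{\sA}_{p([l,a])} = \nabla^{\sA}_{[p(l),a]} - \nabla^{\sA}_{[p(l),a]} = 0, \]
as desired.

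By Theorem~\ref{thm.ostruzione_connessione_piatta}, the Atiyah class $\At_{\sL/\sA}(\sE)$ is precisely the obstruction to the existence of such an $\sL$-connection extending $\nabla^{\sA}$ with curvature in $\sG_2^2\otimes\END_{\Oh_X}(\sE)$. Having produced one explicitly, we conclude $\At_{\sL/\sA}(\sE)=0$. No step looks hard; the only thing to be careful about is pinning down the precise meanings of ``commutes with anchor maps'' and ``commutes with adjoint Lie actions of $\sA$'', which are exactly what is needed to make $\nabla'$ well-defined as a connection and to force the curvature into $\sG_2^2$.
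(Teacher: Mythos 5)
Your proposal is correct and follows essentially the same route as the paper: both define the extension $\nabla'_l:=\nabla^{\sA}_{p(l)}$, use the compatibility with anchors for the Leibniz rule, and use $p([a,l])=[a,p(l)]$ together with flatness of $\nabla^{\sA}$ to show the curvature lies in $\sG_2^2\otimes\END_{\Oh_X}(\sE)$, before invoking Theorem~\ref{thm.ostruzione_connessione_piatta}. Your reading of ``commutes with adjoint Lie actions'' as $p([a,l])=[a,p(l)]$ for $a\in\sA$, $l\in\sL$ is exactly the one the paper makes explicit.
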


\begin{proof} The assumption that $p \colon \sL \to \sA$  commutes  with  adjoint Lie actions of $\sA$ means that $p([x,y])=[x, p(y)]$ for every $x \in \sA$ and $y \in \sL$.

Let $\nabla \colon \sA \to \END_{\K}(\sE)$ be a flat $\sA$-connection on $\sE$. The existence of an $\Oh_X$-linear projection  $p \colon \sL \to \sA$  commuting with anchor maps ensures that the composition
	$ \widetilde{\nabla}:= \nabla p \colon \sL \to \END_{\K}(\sE)$ is a connection. In fact, for $l \in \sL$, $f \in \Oh_X$ and $e \in \sE$,
	\[ \widetilde{\nabla}_l (fe) = \nabla_{p(l)} (fe)= a_{\sA} (p(l))(f)e + f \nabla_{p(l)} (e)= a_{\sL}(l)(f)e + f \widetilde{\nabla}_l (e).\]
For every $a\in \sA$ and every $l\in\sL$ we have 
\[ [\widetilde{\nabla}_a,\widetilde{\nabla}_l]=[\nabla_{a},\nabla_{p(l)}]=\nabla_{[a,p(l)]}=\nabla_{p[a,l]}=
 \widetilde{\nabla}_{[a,l]},\]
 and this implies that the curvature of $\widetilde{\nabla}$ belongs to $\sG_2^2 \otimes \END_{\Oh_X}(\sE)$, so that by Theorem~\ref{thm.ostruzione_connessione_piatta} the Atiyah class of $\sE$ is trivial.
\end{proof}

Notice that Corollary~\ref{cor.criteriosvanimento} applies in particular in the case $X=\Spec(\K)$ and $\sA$ a semisimple Lie algebra. On the other hand, 
the Examples 2.10 and 2.11 of \cite{CSX} give explicit situations where $X$ is a single point and the 
Atiyah class does not vanish.

\bigskip
\section{Semiregularity maps and obstructions}

Let  $(\sL,\sA)$ be a Lie pair on a smooth separated scheme $X$ of finite type over a field $\K$ of characteristic 0.
Given a locally free $\sA$-module $(\sE, \nabla^{\sA})$ we introduced the Atiyah class 
\[  \At_{\sL/\sA}(\sE)\in \HH^1(\sA;(\sL/\sA)^\vee\otimes \END_{\Oh_X}(\sE)),\]
which is the \emph{primary obstruction} to the extension of the $\sA$-connection $\nabla^{\sA}$ to a flat $\sL$-connection; more precisely  the Atiyah class is a complete obstruction to the extension of  $\nabla^{\sA}$ to an  $\sL$-connection
with curvature in $\sG_2^2\otimes \END_{\Oh_X}(\sE)$.

Taking exterior cup products in $\sA$-cohomology it makes sense to consider the exterior powers
\[  \At_{\sL/\sA}(\sE)^{k}\in \HH^{k}\left(\sA;{\bigwedge}^k(\sL/\sA)^\vee\otimes 
\END_{\Oh_X}(\sE)\right)\]
together with the morphisms of graded vector spaces 
\[ \HH^{*}\left(\sA;\END_{\Oh_X}(\sE)\right)\to 
\HH^{*}\left(\sA;{\bigwedge}^k(\sL/\sA)^\vee\otimes \END_{\Oh_X}(\sE)\right)[k]\to 
\HH^{*}\left(\sA;{\bigwedge}^k(\sL/\sA)^\vee\right)[k],\]
\[ x\mapsto \frac{1}{k!}\Tr(\At_{\sL/\sA}(\sE)^{k}x).\]

The following definition is a clear natural extension of the definition of semiregularity maps for coherent sheaves \cite{ChS,BF}.

\begin{definition}  In the above situation, for every $k\ge 0$ the map 
\[ \tau_k\colon  \HH^{2}\left(\sA;\END_{\Oh_X}(\sE)\right)\to  
\HH^{2+k}\left(\sA;{\bigwedge}^k(\sL/\sA)^\vee\right),\quad
\tau_k(x)=\frac{1}{k!}\Tr(\At_{\sL/\sA}(\sE)^kx),\]
is called the \emph{$k$-semiregularity map of the $\sA$-module $(\sE, \nabla^{\sA})$}, (with respect to the Lie pair $(\sL,
\sA)$).
\end{definition}

If $\sG_*^*$ is the Leray filtration of the Lie pair $(\sL,
\sA)$ we have proved in Lemma~\ref{lem.bott0} that there exist canonical isomorphisms $\HH^{2+k}\left(\sA;{\bigwedge}^k(\sL/\sA)^\vee\right)\cong
\HH^{2+2k}\left(X,\sG_k^*/\sG_{k+1}^*\right)$ and therefore there exist  natural maps 
\[ i_k\colon \HH^{2+k}\left(\sA;{\bigwedge}^k(\sL/\sA)^\vee\right)\to 
\HH^{2+2k}\left(X,\frac{\Omega^*(\sL)}{\sG_{k+1}^*}\right),\]
which are injective whenever the Leray spectral sequence degenerates at $E_1$.

We are now ready to apply the abstract general results of \cite{ChS} to our situation in order to obtain the following result.

\begin{theorem} 
Let  $(\sL,\sA)$ be a Lie pair on a smooth separated scheme $X$ of finite type over a field $\K$ of characteristic 0.
Given a locally free $\sA$-module $(\sE, \nabla^{\sA})$, for every $k\ge 0$ the composite map 
\[ i_k\tau_k\colon  \HH^{2}\left(\sA;\END_{\Oh_X}(\sE)\right)\to \HH^{2+2k}\left(X,\frac{\Omega^*(\sL)}{\sG_{k+1}^*}\right)\]
annihilates every obstruction to deformations of $(\sE, \nabla^{\sA})$ as an $\sA$-module.
In particular, if the Leray spectral sequence of the Lie pair $(\sL,\sA)$ degenerates at $E_1$, then every semiregularity map 
annihilates obstructions.
\end{theorem}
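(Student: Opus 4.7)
The plan is to reduce the theorem directly to Theorem~\ref{thm.ChS}, applied to a curved DG-pair assembled from the simplicial constructions of Section~6. Fix an affine open cover $\sU$ of $X$ and a simplicial extension $\nabla$ of $\nabla^{\sA}$ to an $\sL$-connection. The curved DG-algebra structure on $\Tot(\sU, \Omega^*(\sL, \END_{\Oh_X}(\sE)))$ provided by Lemma~\ref{lem.curvedalgebra}, together with the curved ideal $I := \Tot(\sU, \sG^*_1 \otimes \END_{\Oh_X}(\sE))$ supplied by Lemma~\ref{lem.idealecurvato}, gives a curved DG-pair $(A, I)$ with $A := \Tot(\sU, \Omega^*(\sL, \END_{\Oh_X}(\sE)))$, whose Atiyah cocycle $[R] \in H^2(I/I^{(2)})$ represents $\At_{\sL/\sA}(\sE)$. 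The sheaf-level trace \eqref{eq.trace} is compatible with the semicosimplicial structure and totalises to a trace map $\Tr \colon A \to C$, with $C := \Tot(\sU, \Omega^*(\sL))$, in the sense of Definition~\ref{def.tracemap}.

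I would first identify the DG-Lie quotient $A/I$ with $\Tot(\sU, \Omega^*(\sA, \END_{\Oh_X}(\sE)))$ via the restriction $\varrho \otimes \Id$; Theorem~\ref{thm.Deformazioni} then recognises $A/I$ as the DG-Lie algebra governing deformations of $(\sE, \nabla^{\sA})$ as an $\sA$-module, giving $H^2(A/I) \cong \HH^2(\sA; \END_{\Oh_X}(\sE))$. Next, using \eqref{equ.potenzeideale} together with the surjectivity of $\Tr \colon \END_{\Oh_X}(\sE) \to \Oh_X$, the filtration $C_k := \Tr(I^{(k)})$ is identified with $\Tot(\sU, \sG^*_k)$. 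Example~\ref{ex.cech} then gives $H^{2+2k}(C/C_{k+1}) \cong \HH^{2+2k}(X, \Omega^*(\sL)/\sG^*_{k+1})$, while Lemma~\ref{lem.bott0} supplies the isomorphism $H^{2+2k}(C_k/C_{k+1}) \cong \HH^{2+k}(\sA; \wedge^k(\sL/\sA)^\vee)$.

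With these identifications in hand, Theorem~\ref{thm.ChS} applied to $(A, I)$ and $\Tr$ produces an $L_\infty$-morphism $\sigma_k \colon A/I \rightsquigarrow (C/C_{k+1})[2k]$ with linear component
\[ \sigma_k^1(x) = \frac{1}{k!} \Tr(R^k x), \]
which therefore annihilates obstructions for the deformation functor associated to $A/I$. Since $[R]$ represents $\At_{\sL/\sA}(\sE)$ and $\sigma_k^1$ factors through the natural map $H^{2+2k}(C_k/C_{k+1}) \to H^{2+2k}(C/C_{k+1})$, the induced map on $H^2$ coincides with $i_k \tau_k$, proving the first assertion. For the second, if the Leray spectral sequence degenerates at $E_1$ then, as recalled after Definition~\ref{def.tracemap}, each $C_k/C_{k+1} \hookrightarrow C/C_{k+1}$ is injective in cohomology; hence $i_k$ is injective and the vanishing of $i_k \tau_k$ on obstructions forces $\tau_k = 0$ on obstructions.

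The main technical obstacle I anticipate is the bookkeeping required to match the abstract obstructions coming from the DG-Lie algebra $A/I$ (as exploited by Theorem~\ref{thm.ChS}) with the concrete deformation-theoretic obstructions of $(\sE, \nabla^{\sA})$ from Theorem~\ref{thm.Deformazioni}, together with the verification that the isomorphisms of Lemma~\ref{lem.bott0} and Example~\ref{ex.cech} are natural enough to identify $\sigma_k^1$ on $H^2$ with $i_k \tau_k$. Once these compatibilities are checked, the result is a formal consequence of Theorem~\ref{thm.ChS}.
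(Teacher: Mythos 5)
Your proposal is correct and follows essentially the same route as the paper: totalise the curved DG-pair $(A,I)$ of Lemmas~\ref{lem.curvedalgebra} and~\ref{lem.idealecurvato}, identify $A/I$ with the controlling DG-Lie algebra of Theorem~\ref{thm.Deformazioni}, and apply Theorem~\ref{thm.ChS} to the totalised trace, matching $\sigma_k^1$ with $i_k\tau_k$ in cohomology. Your explicit remark that surjectivity of $\Tr$ identifies $C_k=\Tr(I^{(k)})$ with $\Tot(\sU,\sG^*_k)$ is a small point the paper leaves implicit, but otherwise the arguments coincide.
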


\begin{proof} We take an affine cover $\sU$ of $X$ and we choose a simplicial connection
$\nabla \in \Tot(\sU, \Omega^1(\sL)\otimes P(\sL,\sE))$  extending $\nabla^{\sA}$.
By Lemma~\ref{lem.idealecurvato}, the ideal $I:= \Tot(\sU, \sG^*_1 \otimes \END_{\Oh_X}(\sE))$ is a curved ideal of the curved DG-algebra
\[A:=(\Tot(\sU,\Omega^*(\sL) \otimes \END_{\Oh_X}(\sE)), d_{\Tot} + d_{\nabla}, d_{\Tot}\nabla+ C), \]
so that the quotient 
\[ B:=A/I = \Tot(\sU, \Omega^*(\sA) \otimes \END_{\Oh_X}(\sE)) \]
is a non-curved DG-Lie algebra, with differential given by $d_{\Tot} + d_{\nabla^{\sA}}$. This is precisely the DG-Lie algebra controlling deformations of the $\sA$-module  $(\sE, \nabla^{\sA})$  of Theorem~\ref{thm.Deformazioni}.

The trace morphism
\[\Tr\colon \Omega^*(\sL,\END_{\sO_X}(\sE))\to\Omega^*(\sL)\]
of \eqref{eq.trace} induces
\[\Tr\colon \Tot(\sU, \Omega^*(\sL,\END_{\sO_X}(\sE)))\to \Tot(\sU, \Omega^*(\sL)),\]
which is a trace map in the sense of Definition \ref{def.tracemap}.  It is plain that 
\[\Tr (\Tot (\sU, \sG^*_k \otimes \END_{\sO_X}(\sE)))\subset \Tot(\sU, \sG^*_k),\]
for every $k\ge 0$. Finally, according to \eqref{equ.potenzeideale} and the exactness properties of $\Tot$, for every $i\le j$ we have 
\[ \frac{I^{(i)}}{I^{(j)}}=\frac{\Tot (\sU, \sG^*_i \otimes \END_{\sO_X}(\sE))}{\Tot (\sU, \sG^*_j \otimes \END_{\sO_X}(\sE))}=\Tot \left(\sU, \frac{\sG^*_i}{\sG_j^*} \otimes \END_{\sO_X}(\sE)\right).\]

Now, by  Theorem~\ref{thm.ChS}, there exists an $L_\infty$ morphism between DG-Lie algebras
\[ \sigma^k \colon \Tot(\sU, \Omega^*(\sA) \otimes \END_{\Oh_X}(\sE)) \rightsquigarrow \Tot\left(\sU, \frac{\Omega^*(\sL)}{\sG_{k+1}^*}[2k]\right)\]
whose linear component is given by
\[ \sigma^k_1 \colon \Tot(\sU, \Omega^*(\sA) \otimes \END_{\Oh_X}(\sE)) \to \Tot\left(\sU, \frac{\Omega^*(\sL)}{\sG_{k+1}^*}[2k]\right), \quad \sigma^k_1(x)= \frac{1}{k!}\Tr (R^k x),\]
where $R= d_{\Tot}\nabla+ C$ denotes the curvature of the DG-algebra $A$.

In cohomology the above maps $\sigma^k_1$ may be written as
	\[ \sigma^k_1
 \colon \HH^2(\sA;\END_{\Oh_X}(\sE)) \to \HH^{2k+2}\left(X,\frac{\Omega^*(\sL)}{\sG_{k+1}^*}\right) , \quad \sigma^k_1(x)= \frac{1}{k!}\Tr (\At_{\sL/\sA}(\sE)^k x),\] 
and then $\sigma^k_1=i_k\tau_k$.

Then the theorem is a consequence of the fact that the DG-Lie algebra $\Tot\left(\sU, \frac{\Omega^*(\sL)}{\sG_{k+1}^*}[2k]\right)$ is abelian and then, by general facts (see e.g. \cite{Man,LMDT}), every obstruction of the deformation functor associated to the DG-Lie algebra $B$ is annihilated by the maps $\sigma^k_1$.

\end{proof}

\begin{remark}
	The induced map in hypercohomology $\sigma^k_1$ depends only on the $\sA$-module $(\sE,\nabla^{\sA})$ and not on the choice of a simplicial  $\sL$-connection $\nabla$ extending $\nabla^{\sA}$. In fact, $\sigma^k_1$ depends only on the Atiyah class $\At_{\sL/\sA}(\sE)$ of the curved DG-pair 
	\[ (A=\Tot(\sU,\Omega^*(\sL, \END_{\Oh_X}(\sE))), I= \Tot(\sU,\sG^*_1\otimes 
	\END_{\sO_X}(\sE)) ),\]
	which we proved in Theorem~\ref{thm.ostruzione_connessione_piatta} does not depend on the choice of $\nabla$.
\end{remark}

\end{document}